\documentclass{article}

\usepackage{amsmath,amsfonts,amssymb,amsthm}

\newtheorem{thm}{Theorem}[section]
\newtheorem{prop}[thm]{Proposition}

\newtheorem{lem}[thm]{Lemma}
\newtheorem*{thm1}{Theorem 1}
\newtheorem*{thm2}{Theorem 2}

\newcommand{\bF}{\mathbf{F}}

\begin{document}

\title{The space of two-dimensional vectors over a four-dimensional division algebra over $\bF_2$}

\author{Daisuke Tambara\\
Hirosaki University}

\date{}

\maketitle

\begin{abstract}
 Let $A$ be a non-associative division algebra over a field $F$. Let $A$ act on the space $A^2$ by left multiplication. For nonzero elements $v, v'$ of $A^2$ we ask when the subspaces $Av$ and $Av'$ coincide. The paper gives an answer in the case where $A$ is four-dimensional and $F$ is the field of order 2.
In this case $A$ is known to be isotopic to either of two algebras,  system $V$ and system $W$ of Knuth. 
For each of these two algebras we give an explicit solution of the equation $Av=Av'$. The result is stated for any four-dimensional algebra $A$ defined over an arbitrary base field $F$ and equipped with the  multiplication rule of system $V$ or system $W$.

\end{abstract}

\section*{Introduction}

\medskip
Let $A$ be a division algebra  over a field $F$. 
Let $A$ act on the space $A^2=A\oplus A$ by left multiplication: 
$a(x,y)=(ax,ay)$. When $A$ is associative, $A^2$ is just a vector space over $A$.
But when $A$ is not associative,  as noted by Albert \cite{Alb},  basic facts about vector spaces do not hold for $A^2$.
This motivates us to study  properties of the space $A^2$ as contrasted with ordinary vector spaces.
Here considered is a question of proportionality of vectors.
An element $v \in A^2$ generates the subspace $Av=\{av\mid a\in A\}$ of $A^2$.
Our  question is when the equality $Av=Av'$ holds for $v,v'\in A^2$.
If $A$ is associative, this has a trivial answer.
In our previous work \cite{Tam} we dealt with the question in the case where $F$ is a finite field and $A$ is a three-dimensional non-associative division algebra to obtain the answer:
If the two components of $v$ are linearly independent over $F$, then $Av=Av'$ only when $Fv=Fv'$. 

In this paper we deal with the question in the next smallest case, that is, the case where $F=\bF_2$ the field of order 2 and $A$ is four-dimensional over $F$.
Such algebras were classified by Kleinfeld (\cite{Klein}). According to  Knuth (\cite{Knu}) any four-dimensional non-associative division algebra over $F=\bF_2$ is isotopic to either of the following two algebras.
Let $E=\bF_4$ the field of order 4 and let $\sigma$ be the nontrivial automorphism of $\bF_4$, that is, $a^{\sigma}=a^2$. 

\noindent
(1)  $A=E\oplus E=\{(a,b)\mid a,b\in E\}$ with multiplication 
$$
(a,b)(c,d)=(ac+b^{\sigma}d, bc+a^{\sigma}d+b^{\sigma}d^{\sigma}).
$$

\noindent
(2)  $A=E\oplus E$ with multiplication 
$$
(a,b)(c,d)=(ac+\omega b^{\sigma}d, bc+a^{\sigma}d),
$$
where $\omega\in E-F$ is a fixed element.

Knuth named (1) system $V$ and  (2)  system $W$.

The equation $Av=Av'$ being invariant under isotopy, 
we may restrict the algebra $A$ to  
 these representative algebras.
We then have the following solution of the equation $Av=Av'$ respectively.

\begin{thm1} Let $A$ be the algebra of (1).
Let $x,y,x',y'\in A$ be nonzero.
Write $x=(a,b), y=(c,d), x'=(a',b'),y'=(c',d')$
 with $a,b,\dots,c',d'\in E$.
Then $A(x,y)=A(x',y')$ if and only if one of the following holds:

(i) $x=y, x'=y'$.

(ii) $x=x', y=y'$.

(iii) $b=d=0$, $b'=d'=0$, $c=k a$, $c'=k a'$ for some $k\in E^{\times}$.

(iv) There exist $l,m,k\in E^{\times}$ such that
\begin{alignat*}{4}
a^{\sigma}&=l b, \quad& b^{\sigma}&=l a, \quad&
a^{\prime\sigma}&=l b',\quad& b^{\prime\sigma}&=l a',\\
c^{\sigma}&=m d, & d^{\sigma}&= mc, \quad&
c^{\prime\sigma}&=m d', & d^{\prime\sigma}&=m c',\\
c&=t a, & d&=t b, \quad&
c' &=t a', & d'&=t b'.
\end{alignat*}

\end{thm1}

\begin{thm2} 
Let $A$ be the algebra of (2).
Regard $A$ as a two-sided vector space over $E$ by left multiplication
$k(a,b)=(ka,k^{\sigma}b)$
and right multiplication
$(a,b)k=(ak,bk)$.
Let $x,y,x',y'\in A$ be nonzero elements. 
Then $A(x,y)=A(x',y')$ if and only if either of the following holds:

(i) $x'=k x$, $y'=k y$ for some $k\in E^{\times}$.

(ii) $x=ym$, $x'=y'm$ for some $m\in E^{\times}$.

\end{thm2}

\medskip
Actually we have similar results over a general base field.
Let $F$ be a field of arbitrary characteristic and $E/F$ a quadratic extension with nontrivial automorphism $\sigma$. Let $\omega\in E-F$.
Then $A=E\oplus E$ becomes an algebra over $F$ with multiplication given by the formula of (1) or (2).
We call $A$ the algebra of type V or type W respectively.
The algebra of type W is always a division algebra, while
the algebra of type V is a division algebra if and only if $t^2-3t+1$ is irreducible over $F$.
In each of these cases we give the solution for the equation $Av=Av'$. 
(Theorem 2.1, Theorem  2.2.)

For the algebra of type V we follow Kaplansky's  algebraically closed style (\cite{Kap}):
By extension of scalars we pass to an algebra of split type, where we make direct computations. For the algebra of type W we can readily solve the equation by virtue of the existence of the two-dimensional nucleus.

The paper is organized as follows.
In Section 1 we make basic definitions and introduce the algebras of type V and type W.
In Section 2 we state the main results on the solution of $A(x,y)=A(x',y')$ for these algebras $A$.
In Section 3 we introduce the split version of the algebra of type V.
In Section 4 we solve the equation $A(x,y)=A(x',y')$ for the split algebra $A$ of  type V under some restriction on $x,y,x',y'\in A$.
From this 
we deduce in Section 5 the main result for type V.
In Section 6 we prove the result for type W.

\section{Preliminaries}

Let $F$  be a base field. 
An algebra is a vector space equipped with bilinear multiplication.
For any element $a$ of an algebra $A$ let $L_a$ and $R_a$ denote respectively the left and right multiplication by $a$, considered as a linear map $A\to A$: $L_a(x)=ax$, $R_a(x)= xa$.
An algebra $A$ is called a division algebra if $L_a$ and $R_a$ are bijective for every nonzero $a\in A$.

Two algebras $A$ and $B$ are said to be isotopic if there exists a triple of  linear isomorphisms
$f,g,h\colon A\to B$ such that $f(x)g(y)=h(xy)$ for all $x,y\in A$.

Let $A^2=A\oplus A=\{(x,y)\mid x,y\in A\}$.
Let $A$ act on $A^2$ by left multiplication
$a(x,y)=(ax,ay)$.
An element $v\in A^2$ generates the subspace $Av=\{av\mid a\in A\}$ of $A^2$.

The following proposition translates our equation $Av=Av'$ into an equation for linear transformations on $A$.

\begin{prop}
Let $A$ be any algebra.
Let $x,y,x',y'\in A$. Suppose that $R_{x},R_{y},R_{x'}, R_{y'}$ are invertible.
Then $A(x,y)=A(x',y')$ if and only if 
$R_{x'}^{-1}R_{x}=R_{y'}^{-1}R_{y}$, or equivalently
$R_{x}R_{y}^{-1}=R_{x'}R_{y'}^{-1}$.

\end{prop}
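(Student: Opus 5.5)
The plan is to rewrite the set $A(x,y)$ as the graph of a linear map and compare graphs. Since $a(x,y)=(ax,ay)=(R_x a, R_y a)$, we have
$$A(x,y)=\{(R_x a, R_y a)\mid a\in A\}.$$
Because $R_x$ is invertible, we can substitute $u=R_x a$, so that $a$ ranges over $A$ exactly when $u$ does. This gives
$$A(x,y)=\{(u, R_yR_x^{-1}u)\mid u\in A\},$$
which is the graph of the linear map $R_yR_x^{-1}\colon A\to A$. In the same way, $A(x',y')$ is the graph of $R_{y'}R_{x'}^{-1}$.

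Next I use that two linear maps with the same domain are equal if and only if their graphs coincide. For the nontrivial direction, suppose the graphs are equal. For each $u$, the pair $(u,R_yR_x^{-1}u)$ lies in the second graph, so it has the form $(u',R_{y'}R_{x'}^{-1}u')$. Comparing first components gives $u'=u$, and then the second components agree. Hence
$$A(x,y)=A(x',y') \iff R_yR_x^{-1}=R_{y'}R_{x'}^{-1}.$$

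Finally I convert this into the two forms stated in the proposition, using only that all four right multiplications are invertible. Multiplying the identity on the left by $R_{y'}^{-1}$ and on the right by $R_x$ gives
$$R_{y'}^{-1}R_y=R_{x'}^{-1}R_x,$$
which is the first form. Alternatively, taking inverses of both sides gives $R_xR_y^{-1}=R_{x'}R_{y'}^{-1}$, the second form. Each step can be undone, so all the conditions are equivalent.

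I do not expect a real obstacle here. The only point needing care is the reparametrization $u=R_x a$, which is where invertibility is used to see the subspace as a graph over the first coordinate. The order of composition also has to be tracked, since the algebra is not associative but the $R$'s compose as linear maps.
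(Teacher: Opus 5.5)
Your proof is correct and is essentially the paper's argument. The paper works with the coefficient $a$: for each $a$ it finds $a'$ with $ax=a'x'$ and $ay=a'y'$, concludes $a'=R_{x'}^{-1}R_x(a)=R_{y'}^{-1}R_y(a)$, and proves the two inclusions separately. You instead parametrize $A(x,y)$ by the first coordinate $u=ax$ and compare the graphs of $R_yR_x^{-1}$ and $R_{y'}R_{x'}^{-1}$, which gives both inclusions at once; your conversion between the equivalent forms of the criterion is also correct.
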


\medskip
\begin{proof}
Suppose $A(x,y)=A(x',y')$. For any $a\in A$ there exists $a'\in A$ such that
$ax=a'x',\;ay=a'y'$.
Then
$a'=R_{x'}^{-1}R_{x}(a), \;
a'=R_{y'}^{-1}R_{y}(a)$,
hence
$R_{x'}^{-1}R_{x}(a)=R_{y'}^{-1}R_{y}(a)$.
Thus $R_{x'}^{-1}R_{x}=R_{y'}^{-1}R_{y}$.

Conversely suppose $R_{x'}^{-1}R_{x}=R_{y'}^{-1}R_{y}$.
Reversing the above argument, we see $A(x,y)\subset A(x',y')$.
We have also $R_{x}^{-1}R_{x'}=R_{y}^{-1}R_{y'}$,  therefore
$A(x',y')\subset A(x,y)$.
Thus $A(x,y)=A(x',y')$.
\end{proof}

If a triple of linear isomorphisms $f,g,h\colon A\to B$ gives an isotopy,
the equation $A(x,y)=A(x',y')$ is equivalent to the equation
$B(g(x),g(y))=B(g(x'), g(y'))$.

Now we define two classes of algebras as
obvious generalizations of system $V$ and system $W$ of Knuth.  
Let $E/F$ be a quadratic field extension with nontrivial automorphism $\sigma$.
Let $N\colon E\to F$ be the norm map: $N(x)=xx^{\sigma}$.

Let $A=E\oplus E$. This is  a four-dimensional space over $F$.
Define multiplication on $A$ by either of the following rule. 

\noindent
(1)  
$(a,b)(c,d)=(ac+b^{\sigma}d, bc+a^{\sigma}d+b^{\sigma}d^{\sigma})$.

\noindent
(2)  
$(a,b)(c,d)=(ac+\omega b^{\sigma}d, bc+a^{\sigma}d)$,
where $\omega$ is a fixed element of $E-F$.

We call the resulting algebra $A$ respectively the algebra of type V or type W associated with the extension $E/F$.
When $F=\bF_2$, these specialize to the algebras in Introduction.

For the algebra of type V one sees that the left multiplication $L_{(a,b)}$ and right multiplication $R_{(a,b)}$ have determinant
$N(a)^2-3N(a)N(b)+N(b)^2$.
Therefore, $A$ is a division algebra if the quadratic $t^2-3t+1$ is irreducible over $F$, which is the case for $F=\bF_2$. 

For the algebra of type W
the left multiplication $L_{(a,b)}$ and right multiplication $R_{(a,b)}$ have determinant
$(N(a)-\omega N(b))(N(a)-\omega^{\sigma}N(b))$.
Therefore $A$ is always a division algebra.

In either type  
the subspace $A_0=E\oplus 0=\{(a,0)\mid a\in E\}$ is a subalgebra.
In case of type W, as mentioned in \cite{Knu},  $A_0$ is the nucleus of $A$, meaning that the associativity
$(xy)z=x(yz)$ holds whenever one of $x,y,z$ belongs to $A_0$.
In case of type V $A_0$ is the weak nucleus in the sense of Knuth (\cite{Knu}), meaning that
$(xy)z=x(yz)$ holds whenever two of $x,y,z$ belong to $A_0$.

\section{The main results}

Let $E/F$ be a quadratic extension with nontrivial automorphism $\sigma$.

Firstly let $A$ be the algebra of type V associated with $E/F$.

\begin{thm}
Let $x,y,x',y'\in A$.
Assume $R_{x},R_{y},R_{x'},R_{y'}$ are invertible.
Write $x=(a,b), y=(c,d), x'=(a',b'),y'=(c',d')$ with $a,b,\dots,c',d'\in E$.
Then  $A(x,y)=A(x',y')$ if and only if one of the following holds:

(i) $x'=tx$, $y'=ty$ for some $t\in F^{\times}$.

(ii) $x=my$, $x'=my'$ for some $m\in F^{\times}$.

(iii) $b=d=0$, $b'=d'=0$, $a'=ta$, $c'=tc$ for some $t\in E^{\times}$.

(iv) There exist $l,m, t\in E^{\times}$ such that
\begin{alignat*}{4}
a^{\sigma}&=l b, \quad& b^{\sigma}&=l a, \quad&
a^{\prime\sigma}&=l b',\quad& b^{\prime\sigma}&=l a',\\
c^{\sigma}&=m d, & d^{\sigma}&= mc, \quad&
c^{\prime\sigma}&=m d', & d^{\prime\sigma}&=m c',\\
c&=t a, & d&=t b, \quad&
c' &=t a', & d'&=t b'.
\end{alignat*}
(These equations imply 
$lt^{\sigma}=m t$.)
\end{thm}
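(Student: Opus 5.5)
By Proposition 1.1, since all four right multiplications are invertible, the equation $A(x,y)=A(x',y')$ is equivalent to $R_{x'}^{-1}R_x=R_{y'}^{-1}R_y$. Set $T=R_{x'}^{-1}R_x$. The problem then becomes the following: for which invertible linear maps $T$ of $A$ does the $F$-subspace
$$
S_T=\{z\in A\mid R_zT^{-1}\in R_A\},\qquad R_A=\{R_w\mid w\in A\},
$$
contain both $x$ and $y$? In that situation $x'$ and $y'$ are determined by $R_{x'}=R_xT^{-1}$ and $R_{y'}=R_yT^{-1}$. The map $z\mapsto R_z$ is injective, so $z\mapsto z'$ defined by $R_{z'}=R_zT^{-1}$ is a linear injection $\phi_T\colon S_T\to A$.

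\emph{Sufficiency.} I first check by direct computation with the multiplication rule (1) that each of (i)--(iv) gives $R_{x'}^{-1}R_x=R_{y'}^{-1}R_y$.

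(i) This is the case $T=t^{-1}\mathrm{id}$.

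(ii) Here $R_x=mR_y$, so both sides reduce to the same map.

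(iii) Everything lies in the subalgebra $A_0$, which acts on the first coordinate, and $A_0$ is the weak nucleus. Using $R_{(ta,0)}=R_{(a,0)}R_{(t,0)}$ on the relevant elements, I expect $T=R_{(t,0)}^{-1}$ in both cases.

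(iv) The conditions $a^\sigma=lb$, $b^\sigma=la$ describe a two-dimensional $F$-family of elements $x$ attached to a parameter $l$. I will verify that for such $x$ one has $R_{tx}=R_xP_t$ for a linear map $P_t$ that depends only on $t$ and on the compatibility $lt^\sigma=mt$. This gives $R_x^{-1}R_y=R_{x'}^{-1}R_{y'}$ and hence the equation.

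\emph{Necessity.} Following the plan in the introduction, I extend scalars to an algebraic closure $\bar F$. Then $E\otimes\bar F\cong\bar F\times\bar F$ with $\sigma$ swapping the factors. The split algebra $\bar A=\bar F^4$ has coordinates $(a_1,a_2,b_1,b_2)$, and the multiplication becomes bilinear with monomial structure constants. The $4\times4$ matrix of $R_z$ is then explicit, and the relations $R_xT^{-1}=R_{x'}$, $R_yT^{-1}=R_{y'}$ become polynomial equations. The split algebra has a two-dimensional torus of autotopies, obtained by scaling the coordinates with compatible characters. I will use this torus, and the symmetry interchanging the two $\bar F$-factors, to normalize $x$ and $y$.

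There are two cases. If $x,y$ are $F$-dependent, then $R_x$ and $R_y$ are proportional, and the equation forces case (ii). If $x,y$ are independent, then $\dim S_T\ge 2$. The key step is to classify the invertible $T$ with $\dim(R_{\bar A}T\cap R_{\bar A})\ge 2$, together with the corresponding subspaces $S_T$.

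I expect this classification to be the main obstacle. My first attempt will be to write $T^{-1}=R_{x}^{-1}R_{x'}$ and impose that $R_yR_x^{-1}R_{x'}$ is again a right multiplication for a two-dimensional family of $y$. In the split coordinates, the determinant formula $N(a)^2-3N(a)N(b)+N(b)^2$ factors. I will use this to separate three situations: $x$ generic; $x$ in the split weak nucleus $A_0$; and $x$ on the special loci $a^\sigma=lb$, $b^\sigma=la$, which are exactly where $R_x$ has extra symmetry. In each situation I will solve the resulting polynomial system for $(y,x',y')$. I expect the outcome to be $T$ scalar in the generic situation, giving (i); $T=R_{(t,0)}^{-1}$ on $A_0$, giving (iii); and the family $P_t$ on the special loci, giving (iv).

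Finally, I will descend to $F$. The solutions must be $\mathrm{Gal}(\bar F/F)$-stable, and the parameters $t,l,m$ found over $\bar F$ must therefore lie in $F^\times$ or $E^\times$, as stated in the theorem. Irreducibility of $t^2-3t+1$ is not needed for this descent, since invertibility of the four right multiplications is assumed directly.
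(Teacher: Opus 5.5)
Your overall architecture (Proposition 1.1, base change to the split form, explicit $4\times 4$ matrices, descent) matches the paper's. There are, however, two genuine gaps.

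\textbf{The sufficiency argument for (iv) is wrong.} Proposition 1.1 needs a common factor on the right relating the primed pair to the unprimed pair, i.e.\ $R_xR_y^{-1}=R_{x'}R_{y'}^{-1}$. Your claim $R_{tx}=R_xP_t$, applied to both $x$ and $x'$, only gives $R_x^{-1}R_y=R_{x'}^{-1}R_{y'}$. By Proposition 1.1 with $y$ and $x'$ interchanged, that is the criterion for $A(x,x')=A(y,y')$, which is a different statement.

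The claim itself is also false. Over $\bF_2$ take $l=1$, $t=\omega$, $x=(1,1)$, $x'=(\omega,\omega^2)$. Then $y=(\omega,\omega)$, $y'=(\omega^2,1)$ and $m=\omega$, so this is an instance of (iv). The unique $u$ with $uy=x$ is $u=(0,\omega)$. But $(0,\omega)y'=(\omega^2,\omega)\neq x'$, so $R_x^{-1}R_y\neq R_{x'}^{-1}R_{y'}$. Hence $R_x^{-1}R_{x(t,0)}$ is not determined by $l$ and $t$.

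What is true is the left-sided version. For $x$ with $a^\sigma=lb$, $b^\sigma=la$, the map $R_xR_{x(t,0)}^{-1}$ depends only on $l$ and $t$. This is exactly what the paper's explicit matrix $M_{c,d}^{-1}M_{a,b}$ shows in the sufficiency part of Section 4: it is triangular, independent of $l$, and invariant under the $t_1$-scaling.

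\textbf{The necessity direction, which is the real content of the theorem, is not proved.} You identify the classification of $T$ with $\dim(R_{\bar A}T\cap R_{\bar A})\ge 2$ as the main obstacle, and then only list the outcomes you expect.

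The paper makes this tractable by an observation you never use. After base change, each component $\tilde a=(i_1(a),i_2(a))$ of an element coming from $E$ satisfies $(\star)$: both coordinates are zero or both are nonzero. So $(c,d)$ and $(c',d')$ fall into only three patterns. The paper then runs an explicit case tree:
\begin{itemize}
\item it splits on the vanishing of $h_{21},h_{12}$, and then of $h_{41},h_{23},h_{32},h_{24},h_{13}$, where $H=M_{c,d}^{-1}M_{a,b}$;
\item it compares ratios of cofactors such as $k_{42}/k_{12}=-c_1/d_2$ to force proportionalities;
\item it rules out the sign alternatives such as $l'=-l$ separately in characteristic $\neq 2$.
\end{itemize}
Passing to $\bar F$ and discarding $(\star)$ only enlarges that analysis. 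Organizing it by the position of $x$ alone also does not match the solution set. Case (iii) needs both $x,y\in A_0$. Case (iv) needs $y=x(t,0)$, with $x$ and $y$ on the loci for $l$ and for $m=lt^\sigma/t$ respectively. Until that classification is actually carried out, the necessity direction is a plan rather than a proof.
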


(i) and (ii) are the trivial solutions.
(iii) reflects the fact that $A_0=E\oplus 0$ is the weak nucleus of $A$.

Next let $A$ be the algebra of type W associated with  $E/F$.
Regard $A$ as a two-sided vector space over  $E$ by multiplication
$$
k(a,b)=(ka,k^{\sigma}b), \; (a,b)k=(ak,bk).
$$

\begin{thm}
Let $x,y,x',y'\in A$ be nonzero elements.
Then  $A(x,y)=A(x',y')$ if and only if one of the following holds:

(i) 
$x'=l x$, $y'=ly$ for some $l\in E^{\times}$.

(ii) 
$x=ym$, $x'=y'm$ for some $m\in E^{\times}$.

\end{thm}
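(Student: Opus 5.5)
The plan is to work through the Proposition of Section 1, replacing the equation $A(x,y)=A(x',y')$ by the operator equation $R_xR_y^{-1}=R_{x'}R_{y'}^{-1}$ (all right multiplications are invertible, since $A$ is a division algebra), and to exploit the nucleus $A_0=E\oplus 0$ throughout. The starting observation is that the two-sided $E$-structure in the statement is exactly multiplication by nuclear elements:
$$k(a,b)=(k,0)(a,b), \qquad (a,b)k=(a,b)(k,0).$$

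\emph{Sufficiency.} Both conditions follow directly from associativity with a nuclear factor.
\begin{itemize}
\item Under (i), for every $a\in A$ we have $a x'=a((l,0)x)=(a(l,0))x$, and similarly for $y'$. So $A(x',y')=\{(bx,by)\mid b\in A(l,0)\}=A(x,y)$, because $R_{(l,0)}$ is bijective.
\item Under (ii), $ax=a(ym)=(ay)m$. Hence $A(x,y)=\{(zm,z)\mid z\in Ay\}=\{(zm,z)\mid z\in A\}$, since $R_y$ is bijective. The same description holds for $A(x',y')$, so the two subspaces coincide.
\end{itemize}

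\emph{Necessity.} Put $T=R_xR_y^{-1}=R_{x'}R_{y'}^{-1}$.
\begin{itemize}
\item Evaluating at $a=1$ gives $T(y)=x$ and $T(y')=x'$.
\item By the nucleus property, $(k,0)(zx)=((k,0)z)x$, so every $R_x$ commutes with left multiplication by $(k,0)$. Hence $T$ is linear for the left $E$-structure.
\item Likewise $(zx)m=z(xm)$ gives $R_{xm}=R_mR_x$, where $R_m$ denotes right multiplication by $(m,0)$.
\end{itemize}
Next I would make everything explicit over $E$. Writing $y=(c,d)$, we have
$$R_y(a,b)=(ac+\omega b^\sigma d,\; bc+a^\sigma d),$$
which splits as $R_y=C_y+D_y$. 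Here $C_y(a,b)=(ac,bc)$ is $E$-linear for the right structure (coordinatewise scalar multiplication). The part $D_y(a,b)=(\omega b^\sigma d,\,a^\sigma d)$ is $\sigma$-semilinear. Any $F$-linear endomorphism of $E^2$ decomposes uniquely as an $E$-linear part plus a $\sigma$-semilinear part, so I write $T=T_0+T_1$ in the same way. I then compare the linear and semilinear parts of the identity $R_x=TR_y$:
$$C_x=T_0C_y+T_1D_y,\qquad D_x=T_0D_y+T_1C_y.$$
These give two $2\times 2$ matrix equations over $E$, with the entries of $T_0,T_1$ unknown. Combined with the left $E$-linearity of $T$, they should force $T$ into one of two shapes:
\begin{itemize}
\item either $T=R_m$ for some $m\in E^\times$, which is right multiplication by a nuclear element;
\item or $T$ is of a form that, together with $T(y)=x$, yields $x'=lx$, $y'=ly$.
\end{itemize}
In the first case $R_x=R_mR_y=R_{ym}$, so $x=ym$; the same $T$ gives $x'=y'm$, which is (ii). In the second case I expect to derive (i) by comparing $R_{x'}R_{y'}^{-1}=R_xR_y^{-1}$ in the same way. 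For instance, one can note that $R_{x'}^{-1}R_x=R_{y'}^{-1}R_y=:U$ must also be left $E$-linear, and then apply the same analysis to $U$.

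\emph{Main obstacle.} The hard step is solving the coupled matrix equations for $T_0$ and $T_1$, and showing that no solutions exist beyond the two families. I would first normalize using the symmetries already proved in the sufficiency part: replace $(x,y)$ by $(x,y)$ multiplied on the left by suitable nuclear elements, and rescale on the right. This should bring $y$ to $y=(1,0)$, or to the form $(c,1)$ when $d\neq0$. When $y=(1,0)$ we have $R_y=\mathrm{id}$, so $T=R_x$ and the equation becomes $R_x=R_{x'}R_{y'}^{-1}$. In the remaining case I would compute the left-$E$-linearity constraint on $T$ directly. I expect it to force $T_1$ to have the specific $\omega$-twisted shape of some $D_z$, reducing the problem to the first case.

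At the point where the nonsplitness matters, the key input is $\omega\notin F$. It guarantees that $N(a)-\omega N(b)\neq 0$ for $(a,b)\neq 0$, and I expect it to rule out the extra solutions that appear in type V.
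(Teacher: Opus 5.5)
\paragraph{Sufficiency.} Your sufficiency argument is correct. It is essentially the paper's argument: Propositions 6.2 and 6.3 rest on the same nuclear associativities.

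\paragraph{The gap in necessity.} The necessity direction is not proved. Everything after the decomposition $T=T_0+T_1$ is phrased as ``should force'' or ``I expect''. The step you yourself call the main obstacle is the entire content of this direction.

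Moreover, the one concrete expectation you state is false. Left $E$-linearity of $T$ only says that $T_0$ is diagonal and $T_1$ antidiagonal, i.e.\ $T(u,v)=(\alpha u+\gamma v^\sigma,\ \beta v+\delta u^\sigma)$ with $\alpha,\beta,\gamma,\delta\in E$. It does not force $T_1=D_z$. Take $x=e=(1,0)$ and $y=j=(0,1)$, which is already in your normalized form $(c,1)$. Then $T=R_j^{-1}$, given by $T(p,q)=(q^\sigma,\,p^\sigma/\omega^\sigma)$. Its semilinear part is of the form $D_z$ only if $\omega=\omega^\sigma$. So $T$ is not a right multiplication, and your reduction to the case $y=(1,0)$ breaks down exactly on the pairs described by (i).

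Likewise, the second branch of your dichotomy (``$T$ is of a form that yields $x'=lx$, $y'=ly$'') has no content yet. No normal form for $T$ will produce (i). What is needed is control over which $y$ are compatible with a given $T$.

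\paragraph{The missing idea.} Put $S_T=\{y\in A\mid TR_y=R_{T(y)}\}$; both $y$ and $y'$ lie in $S_T$.
\begin{itemize}
\item $S_T$ is a left $E$-subspace of $A$, which is two-dimensional over $E$. It is closed under addition. Also $R_{ky}=R_yR_k$ (middle nucleus, with $R_k$ right multiplication by $(k,0)$), and $T$ commutes with left multiplication by $k$. Hence $TR_{ky}=R_{T(y)}R_k=R_{kT(y)}=R_{T(ky)}$.
\item If $\dim_E S_T\le 1$, then $y'=ly$, and $x'=T(y')=lT(y)=lx$, which is (i).
\item If $S_T=A$, then $T(zy)=zT(y)$ for all $z,y$. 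So $T=R_m$ with $m=T(e)$ in the right nucleus, and $m\ne 0$ since $x\ne0$. Comparing $(jj)m=(\omega c,\omega^\sigma d)$ with $j(jm)=(\omega c,\omega d)$ for $m=(c,d)$ gives $m\in A_0$; this is where $\omega\ne\omega^\sigma$ is used. Then $x=ym$, $x'=y'm$, which is (ii).
\end{itemize}

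\paragraph{The paper's route.} The paper reaches the same dichotomy without operators. It writes $(x,y)=(e,j)P$ with $P\in M_2(A_0)$.
\begin{itemize}
\item If $P$ is singular, then $y=xt$ and $A(x,y)=\{(a,at)\mid a\in A\}$, and (ii) follows.
\item Otherwise the right $GL_2(A_0)$-action (Proposition 6.3) reduces the problem to $A(e,j)=A(\tilde x',\tilde y')$. This forces $\tilde y'=\tilde x'j$ and $(j\tilde x')j=j(\tilde x'j)$. Hence $\tilde x'\in A_0$ by Lemma 6.1, and (i) holds.
\end{itemize}
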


When $F=\bF_2$, these theorems specialize to those in Introduction.

Theorem 2.1 will be proved in Sections 3--5  through consideration of a split form of the algebra. Theorem 2.2 will be proved directly in Section 6.

\section{The split algebra of type V}

We replace the quadratic Galois extension $E/F$ for the algebra of type V by the split extension $F^2/F$ to obtain a split algebra.
Namely let $E=F^2$, the direct product algebra of two copies of $F$. 
Let $\sigma\colon E\to E$ be the automorphism $(x_1,x_2)\mapsto (x_2,x_1)$.
Let $A=E\oplus E$. Define the multiplication on $A$ by the same rule as (1) of Section 1:
$$
(a,b)(c,d)=(ac+b^{\sigma}d, bc+a^{\sigma}d+b^{\sigma}d^{\sigma})
\quad
\text{for $a,b,c,d\in E$.}
$$
We call $A$ {\it the split algebra of type V}.
 
Write $a=(a_1,a_2)$, $b=(b_1,b_2)$, $c=(c_1,c_2)$, $d=(d_1,d_2)$.
We have
\begin{align*}
&(ac+b^{\sigma}d, bc+a^{\sigma}d+b^{\sigma}d^{\sigma})\\
&=((a_1c_1+b_2d_1, a_2c_2+b_1d_2), (b_1c_1+a_2d_1+b_2d_2, b_2c_2+a_1d_2+b_1d_1))
\end{align*}
and
\begin{align*}
&(a_1c_1+b_2d_1, a_2c_2+b_1d_2, b_1c_1+a_2d_1+b_2d_2, b_2c_2+a_1d_2+b_1d_1)\\
&=(a_1,a_2, b_1,b_2)
\begin{pmatrix}
c_1 & 0 & 0 & d_2 \\
0 & c_2 & d_1 & 0 \\
0 & d_2 & c_1 & d_1 \\
d_1 & 0 & d_2 & c_2 \\
\end{pmatrix}.
\end{align*}
Put
$$
M_{c,d}=
\begin{pmatrix}
c_1 & 0 & 0 & d_2 \\
0 & c_2 & d_1 & 0 \\
0 & d_2 & c_1 & d_1 \\
d_1 & 0 & d_2 & c_2 \\
\end{pmatrix},
$$
so that the right multiplication $R_{(c,d)}$ is represented by the matrix
$M_{c,d}$.
One has
$$
\det M_{c,d}=c_1^2c_2^2-3c_1c_2d_1d_2+d_1^2d_2^2.
$$
So $R_{(c,d)}$ is invertible if and only if $c_1^2c_2^2-3c_1c_2d_1d_2+d_1^2d_2^2\ne 0$.

One sees also the left multiplication $L_{(c,d)}$ has the same determinant.

Now let us back to the algebra $A$ of type V associated with a quadratic Galois extension $E/F$. Let $\sigma$ be the nontrivial automorphism of $E/F$.
Take  a splitting field $K$ of the extension $E/F$, for example $E$ itself.
We have two injections $i_1,i_2\colon E\to K$, so that $i_2=i_1\sigma$.
We have a $K$-algebra isomorphism
$\iota\colon K\otimes E\to  K^2$ taking $
1\otimes a$ to $(i_1(a), i_2(a))$.
Let $\tau$ be the automorphism of $K^2$ taking $(x_1,x_2)$ to $(x_2,x_1)$.
The isomorphism $\iota\colon K\otimes E\to K^2$  transforms $1\otimes \sigma$ into $\tau$. 

Let $\tilde A$ be the split $K$-algebra of type V associated with the  extension $K^2/K$. 
We have a $K$-algebra isomorphism
$K\otimes A\to \tilde A$ taking 
$$
1\otimes (a,b) \mapsto (\iota(1\otimes a), \iota(1\otimes b))
=((i_1(a), i_2(a)), (i_1(b), i_2(b))).
$$
We have
$$
\det R_{(a,b)}=\det M_{(\iota(1\otimes a), \iota(1\otimes b))}=
N(a)^2-3N(a)N(b)+N(b)^2.
$$

\section{Computation in the split algebra of type V}

In this section we solve the equation 
$R_{x}R_{y}^{-1}=R_{x'}R_{y'}^{-1}$ for the split algebra of type V.
The main theorem for the nonsplit algebra of type V will be deduced in the next section from the result here.

Let $A$ be the split algebra of type V associated with the extension $F^2/F$ as defined in Section 3.
Let $M_{c,d}$ be the matrix defined there for any $(c,d)\in A$.
The linear transformation $R_{x}R_{y}^{-1}$ for $x=(a,b)$, $y=(c,d)$ is represented by the matrix $M_{c,d}^{-1}M_{a,b}$. (Note that these matrices operate on row-vectors by right multiplication.)

We say an element $x=(x_1,x_2)\in A$ satisfies condition $(\star)$ if $x_1x_2\ne 0$ or $x_1=x_2=0$.

\begin{thm}
Let $(a,b), (c,d), (a',b'), (c',d')\in A$.
Assume that the matrices $M_{c,d}$, $M_{c',d'}$ are invertible.
Assume that each of $a,b,\dots,c',d'$ satisfies ($\star$).
Then  $M_{c,d}^{-1}M_{a,b}= M_{c',d'}^{-1}M_{a',b'}$ if and only if the one of the following holds:

(i) $(a',b',c',d')=t(a,b,c,d)$ for some $t\in F^{\times}$.

(ii) $(a,b,a',b')=m(c,d,c',d')$ for some $m\in F$.

(iii) $b=b'=0$, $d=d'=0$, $(a_1',c_1')=t_1(a_1,c_1)$, $(a_2',c_2')=t_2(a_2,c_2)$ for some $t_1,t_2\in F^{\times}$.

(iv) There exist $l,l',t_1,t_2\in F^{\times}$ such that
\begin{align*}
(a_2,b_1,c_2,d_1)&=l(b_2,a_1,d_2,c_1),\\
(a_2',b_1',c_2',d_1')&=l'(b_2',a_1',d_2',c_1'),\\
(a_1',b_2',c_1',d_2')&=t_1(a_1,b_2,c_1,d_2),\\
(a_2',b_1',c_2',d_1')&=t_2(a_2,b_1,c_2,d_1).
\end{align*}
(These equations imply $l t_2=l' t_1$.)

\end{thm}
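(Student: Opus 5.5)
The plan is to rewrite the matrix equation $M_{c,d}^{-1}M_{a,b}=M_{c',d'}^{-1}M_{a',b'}$ in the equivalent polynomial form
$$M_{c',d'}\,X=M_{c,d}\,X',\qquad X=M_{a,b},\ X'=M_{a',b'}.$$
One could instead use the form $M_{a,b}M_{c,d}^{-1}$ from Proposition 1.1, but the form above involves only products of the sparse matrices $M$ and so avoids inverses. Sufficiency of (i)--(iv) is a direct check in each case. For (i) and (ii) it follows from linearity of $(c,d)\mapsto M_{c,d}$. For (iii), $M_{c,0}$ is diagonal, $\mathrm{diag}(c_1,c_2,c_1,c_2)$, so everything reduces to commuting scalars coordinatewise. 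For (iv), the relations force $M_{a,b}$ and $M_{c,d}$ into a common structured form in which the needed identity can be verified by hand. So the real work is necessity.

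For necessity, expand both sides as $4\times4$ matrices whose entries are quadratic in the sixteen coordinates $a_i,b_i,c_i,d_i,a'_i,\dots$. The identity $M_{c',d'}M_{a,b}=M_{c,d}M_{a',b'}$ then gives sixteen bilinear equations. Many of them are of the shape $c'_1a_1+\dots=c_1a'_1+\dots$, and the zero pattern of $M$ makes several entries two-term equations. From these I will extract conditions such as $c'_1a_1=c_1a'_1$ and $d'_2b_1=d_2b'_1$ (up to the exact positions). This is where condition $(\star)$ is used. Each of $a,b,\dots$ either has both coordinates zero or both nonzero, so a two-term relation like $c'_1a_1=c_1a'_1$ gives a clean proportionality $(a'_1,c'_1)=t_1(a_1,c_1)$ without spurious mixed-zero subcases.

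I would then split according to which of $b,d,b',d'$ vanish, using $(\star)$ so that each is either $0$ or has nonzero coordinates.
\begin{itemize}
\item If $b=d=b'=d'=0$, all matrices are diagonal and the equation reduces to $c'_ia_i=c_ia'_i$ for $i=1,2$, which with invertibility ($c_i,c'_i\neq0$) gives (iii).
\item If some but not all of $b,d,b',d'$ vanish, I expect the off-diagonal equations to force either proportionality of $(a,b)$ to $(c,d)$, giving (ii) (possibly with $m=0$ when $a=b=a'=b'=0$), or a contradiction.
\item If all of $b,d,b',d'$ are nonzero, solve the diagonal-block equations for ratios $t_1=a'_1/a_1=c'_1/c_1=\dots$ and $t_2$, and then feed them into the remaining equations. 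Either $t_1=t_2$, which should give (i), or $t_1\neq t_2$. In the latter case the equations become homogeneous linear conditions whose nontrivial solutions are exactly the proportionalities $(a_2,b_1,c_2,d_1)=l(b_2,a_1,d_2,c_1)$ of (iv), and similarly for the primed quadruple with $l'$.
\end{itemize}

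The main obstacle is this last generic subcase: organizing the sixteen bilinear equations so that the dichotomy $t_1=t_2$ versus $t_1\ne t_2$ emerges cleanly and the $l$-relations are forced. My first attempt will be to use the block structure of $M$ with respect to the coordinate orderings $(1,4)$ and $(2,3)$ to decouple the system into two $2\times2$ problems. I will also use the symmetry $1\leftrightarrow2$, $a\leftrightarrow b$ suggested by the form of (iv) to halve the casework.
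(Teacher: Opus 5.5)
\textbf{The opening reformulation is not valid.} From $M_{c,d}^{-1}M_{a,b}=M_{c',d'}^{-1}M_{a',b'}$ you may only conclude $M_{c',d'}M_{c,d}^{-1}M_{a,b}=M_{a',b'}$. Passing to $M_{c',d'}M_{a,b}=M_{c,d}M_{a',b'}$ is cross-multiplication, which would require $M_{c,d}$ and $M_{c',d'}$ to commute, and they generally do not. For instance, $M_{c,0}=\mathrm{diag}(c_1,c_2,c_1,c_2)$ commutes with $M_{0,d'}$ only if $c_1=c_2$: the $(1,4)$ entries of the two products are $c_1d_2'$ and $d_2'c_2$.

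Here is a concrete failure. Take $\lambda\in F$ with $\lambda\ne 0,1$, and set $a=c=(1,\lambda)$, $b=d=0$, $a'=c'=0$, $b'=d'=(1,1)$. All hypotheses hold: $\det M_{c,d}=\lambda^2$, $\det M_{c',d'}=1$, and $(\star)$ is satisfied. Both sides of the original equation are the identity, and (ii) holds with $m=1$. Yet the $(1,4)$ entries of $M_{c',d'}M_{a,b}$ and $M_{c,d}M_{a',b'}$ are $\lambda$ and $1$.

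So your sixteen bilinear equations are not necessary for the equation in the theorem. Your sufficiency check of (ii) ``by linearity'' fails for them, and the entire necessity analysis is set up on the wrong system. There is no bilinear reformulation. Clearing the inverse correctly gives, e.g., $M_{c',d'}\tilde M_{c,d}M_{a,b}=\det(M_{c,d})\,M_{a',b'}$, with $\tilde M_{c,d}$ the cofactor matrix, which has degree four in $(c,d)$.

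The paper handles exactly this difficulty. It writes $H=(\det M_{c,d})^{-1}K$ with $K=\tilde M_{c,d}M_{a,b}$, lists the entries of $K$ explicitly, and compares ratios $k_{ij}/k_{pq}=h_{ij}/h_{pq}$, in which the determinant cancels. For example, $k_{42}/k_{12}=-c_1/d_2$ forces $c_1/d_2=c_1'/d_2'$. This is organized by a case split on the vanishing of $h_{21}$, $h_{12}$ and further entries.

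\textbf{Even on the correct equation, your predicted case outcomes are off.}
\begin{itemize}
\item When some but not all of $b,d,b',d'$ vanish, (i) can occur, not only (ii) or a contradiction. Take $d=d'=0\ne b$ and $(a',b',c',d')=t(a,b,c,d)$; this is not (ii), since $b\ne 0=md$.
\item When all of them are nonzero, (ii) also occurs. Take $a=c$, $b=d$, $a'=c'$, $b'=d'$ with $c_1'/c_1\ne c_2'/c_2$: here $t_1\ne t_2$ but the conclusion is (ii), not (iv). This is the paper's Case (4-I.2). So ``$t_1=t_2$ gives (i), $t_1\ne t_2$ gives (iv)'' cannot be the dichotomy.
\item $M_{c,d}$ does not split into blocks on the coordinates $\{1,4\}$ and $\{2,3\}$. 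The entries $(3,4)=d_1$ and $(4,3)=d_2$ couple them, so the proposed decoupling into two $2\times 2$ problems is not available.
\end{itemize}
Finally, the generic case, which you yourself identify as the main obstacle, is left as a plan. That is where most of the paper's casework lies.
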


\begin{proof}[Proof of sufficiency]
(i) Let $(a',b',c',d')=t(a,b,c,d)$ with $t\in F^{\times}$. Then
$M_{a',b'}=t M_{a,b},\; M_{c',d'}=t M_{c,d}$,
hence
$
M_{c',d'}^{-1}M_{a',b'} =M_{c,d}^{-1}M_{a,b}$.

(ii) Let $(a,b,a',b')=m(c,d,c',d')$ with $m\in F$.
Then
$M_{a,b}=m M_{c,d} , \; M_{a',b'}=m M_{c',d'}$,
hence
$M_{c,d}^{-1}M_{a,b}=mI=M_{c',d'}^{-1}M_{a',b'}$.

(iii)
Let $b=b'=0$, $d=d'=0$. 
Then
$$
M_{a,b}=\text{diag}(a_1,a_2,a_1,a_2),\;
M_{c,d}=\text{diag}(c_1,c_2,c_1,c_2),
$$
$$
M_{c,d}^{-1}M_{a,b}=\text{diag}(\frac{a_1}{c_1},\frac{a_2}{c_2},\frac{a_1}{c_1},\frac{a_2}{c_2}).
$$
Similarly
$$
M_{c',d'}^{-1}M_{a',b'}=\text{diag}(\frac{a_1'}{c_1'},\frac{a_2'}{c_2'},\frac{a_1'}{c_1'},\frac{a_2'}{c_2'}).
$$
Letting further $(a_1',c_1')=t_1(a_1,c_1)$, $(a_2',c_2')=t_2(a_2,c_2)$
with $t_1,t_2\in F^{\times}$, 
we find the two matrices are equal.

(iv)
Let $(a_2,b_1,c_2,d_1)=l(b_2,a_1,d_2,c_1)$ with $l\in F^{\times}$. Then
$$
M_{a,b}=
\begin{pmatrix}
a_1 & 0 & 0 & b_2 \\
0 & l b_2 & l a_1 & 0 \\
0 & b_2 & a_1 & l a_1\\
l a_1 & 0 & b_2 & l b_2 
\end{pmatrix},\quad
M_{c,d}=
\begin{pmatrix}
c_1 & 0 & 0 & d_2 \\
0 & l d_2 & l c_1 & 0 \\
0 & d_2 & c_1 & l c_1 \\
l c_1 & 0 & d_2 & l d_2 \\
\end{pmatrix},
$$
$$
\det M_{c,d}=-l^2 c_1^2 d_2^2,
$$
$$
M_{c,d}^{-1}M_{a,b}=
\begin{pmatrix}
\dfrac{a_1}{c_1} & 0 & 0 & -\dfrac{a_1d_2-b_2 c_1}{c_1^2}\\
0 & \dfrac{b_2}{d_2} & \dfrac{a_1d_2-b_2c_1}{d_2^2} & 0 \\
0 & 0 & \dfrac{b_2}{d_2} & 0 \\
0 & 0 & 0 & \dfrac{a_1}{c_1}
\end{pmatrix}.
$$
Let $(a_2',b_1',c_2',d_1')=l'(b_2',a_1',d_2',c_1')$ with $l'\in F^{\times}$. Then
$$
M_{c',d'}^{-1}M_{a',b'}=
\begin{pmatrix}
\dfrac{a_1'}{c_1'} & 0 & 0 & -\dfrac{a_1'd_2'-b_2'c_1'}{c_1^{\prime2}}\\
0 & \dfrac{b_2'}{d_2'} & \dfrac{a_1'd_2'-b_2'c_1'}{d_2^{\prime2}} & 0 \\
0 & 0 & \dfrac{b_2'}{d_2'} & 0 \\
0 & 0 & 0 & \dfrac{a_1'}{c_1'}
\end{pmatrix}.
$$
Letting $(a_1',b_2',c_1',d_2')=t_1(a_1,b_2,c_1,d_2)$, we find the two matrices are equal.

Thus any of (i)--(iv) implies the equality
$M_{c,d}^{-1}M_{a,b}= M_{c',d'}^{-1}M_{a',b'}$.

\end{proof}

\begin{proof}[Proof of necessity]
Let $\tilde X$ denote the cofactor matrix of a square matrix $X$, so that
$X^{-1}=(\det X)^{-1}\tilde X$.
Put
$$
H=M_{c,d}^{-1}M_{a,b},\;K=\tilde M_{c,d} M_{a,b},
$$
$$
H=(h_{ij}),\; K=(k_{ij}),
$$
so that
$$
h_{ij}=\frac{1}{\det M_{c,d}}k_{ij}.
$$
Also put
$$
H'= M_{c',d'}^{-1}M_{a',b'},\;
K'=\tilde M_{c',d'} M_{a',b'},
$$
$$
H'=(h'_{ij}),\;
K'=(k_{ij}').
$$
The assumption is that $H=H'$.

The proof proceeds along  division into cases
depending on vanishing of some of the entries $h_{ij}$:

(1) $h_{21}\ne 0$, $h_{12}\ne 0$.

(2) $h_{21}=0$, $h_{12}\ne 0$.

(3) $h_{21}\ne 0$, $h_{12}=0$.

(4) $h_{21}=0$, $h_{12}= 0$.

Each case will be further divided  in the course of the proof.

We also consider cases depending on  vanishing of $c_i$, $d_i$:

(I) $c_1\ne 0, c_2\ne 0, d_1\ne 0, d_2\ne 0$.

(II) $c_1=0, c_2=0, d_1\ne 0, d_2\ne 0$.

(III) $c_1\ne 0, c_2\ne 0, d_1=0, d_2=0$.

By condition ($\star$) these three are only possible.

Correspondingly there are three cases:

(I') $c'_1\ne 0, c'_2\ne 0, d'_1\ne 0, d'_2\ne 0$.

(II') $c'_1=0, c'_2=0, d'_1\ne 0, d'_2\ne 0$.

(III') $c'_1\ne 0, c'_2\ne 0,d'_1=0, d'_2=0$.

Recall
$$
\det M_{c,d}=c_1^2c_2^2-3c_1c_2d_1d_2+d_1^2d_2^2.
$$
The entries of $K$ are given as follows:
\begin{align*}
k_{11}&=a_1 c_1 c_2^2 - b_1 c_1 c_2 d_2 -2 a_1 c_2 d_1 d_2 + b_1 d_1 d_2^2,\\
k_{21}&=-d_1^2 (a_1 d_1 - b_1 c_1),\\ 
k_{31}&=c_2 d_1 (a_1 d_1 - b_1 c_1),\\ 
k_{41}&=-(a_1 d_1- b_1 c_1) (c_1 c_2 - d_1 d_2),
\displaybreak[0]\\
\noalign{\smallskip}
k_{12}&=-d_2^2 (a_2 d_2 - b_2 c_2),\\ 
k_{22}&=a_2 c_1^2 c_2 - b_2 c_1 c_2 d_1 -2 a_2 c_1 d_1 d_2 + b_2 d_1^2 d_2,\\ 
k_{32}&= -(a_2 d_2 -b_2 c_2) (c_1 c_2 - d_1 d_2),\\ 
k_{24}&=c_1 d_2 (a_2 d_2 - b_2 c_2),
\displaybreak[0]\\
\noalign{\smallskip}
k_{13}&=d_2 (-b_2 c_1 c_2 + a_1 c_2 d_2 - b_1 d_2^2 + b_2 d_1 d_2),\\ 
k_{23}&=b_1 c_1^2 c_2 - a_1 c_1 c_2 d_1 -2 b_1 c_1 d_1 d_2 + b_2 c_1 d_1^2 
+ a_1 d_1^2 d_2,\\ 
k_{33}&= a_1 c_1 c_2^2 - b_1 c_1 c_2 d_2 - b_2 c_1 c_2 d_1 - a_1 c_2 d_1 d_2 
+ b_1 d_1 d_2^2,\\ 
k_{43}&=  -c_1 (- b_2 c_1 c_2 + a_1 c_2 d_2 - b_1 d_2^2 + b_2 d_1 d_2),
\displaybreak[0]\\
\noalign{\smallskip}
k_{14}&=b_2 c_1 c_2^2 - a_2 c_1 c_2 d_2 + b_1 c_2 d_2^2 
-2 b_2 c_2 d_1 d_2 + a_2 d_1 d_2^2,\\ 
k_{24}&=  d_1 (-b_1 c_1 c_2 + a_2 c_1 d_1 + b_1 d_1 d_2 - b_2 d_1^2),\\ 
k_{34}&= - c_2 (- b_1 c_1 c_2 + a_2 c_1 d_1 + b_1 d_1 d_2 - b_2 d_1^2),\\ 
k_{44}&=  a_2 c_1^2 c_2 - b_1 c_1 c_2 d_2 - b_2 c_1 c_2 d_1 - a_2 c_1 d_1 d_2 + b_2 d_1^2 d_2.
\tag{1}
\end{align*}

From these one sees readily
\begin{lem}
\begin{align*}
k_{21}\ne 0 &\iff a_1d_1-b_1c_1\ne 0,\; d_1\ne 0.\\
k_{12}\ne 0 &\iff a_2d_2-b_2c_2\ne 0, \;d_2\ne 0.\\
k_{41}\ne 0 &\iff a_1d_1-b_1c_1\ne 0, \;c_1c_2-d_1d_2\ne 0.\\
k_{32}\ne 0 &\iff a_2d_2-b_2c_2\ne 0, \;c_1c_2-d_1d_2\ne 0.\\
\end{align*}
\end{lem}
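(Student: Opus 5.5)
The lemma is read off directly from the factored formulas in (1). In each of the four entries, $k_{ij}$ is written as a product of factors, and over a field a product is nonzero exactly when every factor is nonzero. So the plan is to verify, entry by entry, that the factors appearing in (1) are precisely the ones listed in the lemma, up to squares and harmless signs.

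For $k_{21}=-d_1^2(a_1d_1-b_1c_1)$, nonvanishing is equivalent to $d_1^2\neq 0$ and $a_1d_1-b_1c_1\neq 0$, that is, to $d_1\ne 0$ and $a_1d_1-b_1c_1\ne0$. The entry $k_{12}=-d_2^2(a_2d_2-b_2c_2)$ is handled the same way, using the swap of indices $1\leftrightarrow 2$. For $k_{41}=-(a_1d_1-b_1c_1)(c_1c_2-d_1d_2)$, the two factors give exactly the two stated conditions. The entry $k_{32}=-(a_2d_2-b_2c_2)(c_1c_2-d_1d_2)$ is symmetric to it.

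The only step needing real care is making sure the factored expressions in (1) are correct. I would recompute these four cofactor entries of $\tilde M_{c,d}M_{a,b}$ by hand. For instance, $k_{21}$ is the product of row $2$ of $\tilde M_{c,d}$ with column $1$ of $M_{a,b}$, namely $(a_1,0,0,b_1)^T$. This requires only the cofactors $(\tilde M)_{21}$ and $(\tilde M)_{24}$, each a $3\times 3$ minor of the sparse matrix $M_{c,d}$. Expanding these minors should give $-d_1^3$ and $d_1^2c_1$, so that $k_{21}=-d_1^2(a_1d_1-b_1c_1)$. The analogous check for $k_{41}$ uses $(\tilde M)_{41}$ and $(\tilde M)_{44}$, and should produce the factor $c_1c_2-d_1d_2$. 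With the factorizations confirmed, the equivalences are immediate. This verification is the main (though routine) obstacle.
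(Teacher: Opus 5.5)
Your proposal is correct and is essentially the paper's argument: the paper also just reads the lemma off from the factored entries in (1), using that a product over a field is nonzero exactly when each factor is. Your cofactor checks, $(\tilde M_{c,d})_{21}=-d_1^3$ and $(\tilde M_{c,d})_{24}=c_1d_1^2$ (and the analogous ones giving the factor $c_1c_2-d_1d_2$ in $k_{41}$ and $k_{32}$), do confirm those factorizations.
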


\begin{lem}
$h_{12}\ne 0,\; h_{21}=0 \implies c_id_ic_i'd_i'\ne 0$.

\end{lem}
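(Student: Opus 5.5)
The plan is to translate the hypotheses on $h_{12},h_{21}$ into conditions on the cofactor entries $k_{12},k_{21}$, read off consequences from Lemma 4.2 and the explicit formulas (1), and then use condition ($\star$) to rule out case (II). Since $h_{ij}=(\det M_{c,d})^{-1}k_{ij}$ and $\det M_{c,d}\ne 0$, we have $h_{12}\ne 0\iff k_{12}\ne 0$ and $h_{21}=0\iff k_{21}=0$.

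First I treat the unprimed variables. By Lemma 4.2, $k_{12}\ne 0$ gives $d_2\ne 0$ and $a_2d_2-b_2c_2\ne 0$. Condition ($\star$) applied to $d$ then forces $d_1\ne 0$ as well. Among the cases (I), (II), (III) only (I) and (II) remain, and I must exclude (II), where $c_1=c_2=0$. From $k_{21}=-d_1^2(a_1d_1-b_1c_1)=0$ and $d_1\ne 0$ we get $a_1d_1=b_1c_1$. In case (II) this reads $a_1d_1=0$, so $a_1=0$. Condition ($\star$) for $a$ then gives $a_2=0$, hence $a_2d_2-b_2c_2=0-b_2\cdot 0=0$. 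This contradicts $k_{12}\ne 0$, so case (I) holds and $c_1c_2d_1d_2\ne 0$.

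For the primed variables, the standing assumption $H=H'$ gives $h'_{12}=h_{12}\ne 0$ and $h'_{21}=h_{21}=0$. The same argument with $K'$ in place of $K$ (the formulas (1) and Lemma 4.2 hold verbatim for primed letters) yields case (I'), i.e.\ $c_1'c_2'd_1'd_2'\ne 0$. Together this gives $c_id_ic_i'd_i'\ne 0$ for $i=1,2$.

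There is no real obstacle here. The only point needing care is to use ($\star$) twice: once on $d$ to get $d_1\ne 0$, and once on $a$ to propagate $a_1=0$ to $a_2=0$.
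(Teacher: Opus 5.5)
Your proof is correct and follows the paper's argument essentially step for step. You pass to $k_{12}, k_{21}$ (and to the primed entries via $H=H'$), use Lemma 4.2 together with ($\star$) on $d$ to get $d_1d_2\ne 0$, then exclude $c_1=c_2=0$ through the chain $a_1=0\Rightarrow a_2=0\Rightarrow a_2d_2-b_2c_2=0$, and finally repeat the argument for the primed variables.
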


\begin{proof}
Suppose $h_{12}\ne0$. Then $k_{12}\ne 0$, $k_{12}'\ne 0$. By the above lemma this  means that
$a_2 d_2-b_2 c_2\ne 0$, $d_2\ne 0$, 
$a_2' d_2'-b_2' c_2'\ne 0$, $d_2'\ne 0$.
By condition  ($\star$) we then have
$d_1\ne 0$, $d_1'\ne 0$.
Suppose further $h_{21}=0$. Then $k_{21}=0$, $k_{21}'=0$. By the lemma this implies that
$a_1d_1-b_1c_1=0$, $a_1'd_1'-b_1'c_1'=0$.
Suppose further $c_1=0$, $c_2=0$. Then
$a_1d_1=0$, hence 
$a_1=0$. By condition ($\star$) we have $a_2=0$.
Then $a_2d_2-b_2c_2=0$, a contradiction.
We must have $c_1\ne 0$, $c_2\ne 0$ by ($\star$).

Similarly $c_1'\ne 0$, $c_2'\ne 0$.

\end{proof}

\begin{lem}

\begin{align*}
&h_{21}=0,\; c_1'=c_2'=0, \;d_1'd_2'\ne 0 \implies h_{23}=0.\\
&h_{21}=0, \;d_1d_2\ne 0, \;c_1'c_2'\ne 0, \;d_1'=d_2'=0 \implies h_{23}=0.
\end{align*}

\end{lem}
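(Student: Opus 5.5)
The plan is to use the hypothesis $H=H'$ throughout. This lets us compute any entry $h_{ij}$ from whichever side, primed or unprimed, is more convenient. In both statements the primed side has a degenerate shape ($c'=0$ or $d'=0$), which makes the cofactor formulas (1) collapse to single monomials.

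\emph{First implication.} Put $c_1'=c_2'=0$ in the primed versions of (1). Then
\[
k_{21}'=-d_1'^2(a_1'd_1'-b_1'c_1')=-a_1'd_1'^3 \quad\text{and}\quad k_{23}'=a_1'd_1'^2d_2',
\]
since every other term of $k_{23}'$ contains a factor $c_1'$ or $c_2'$. Now $h_{21}'=h_{21}=0$ and $\det M_{c',d'}\ne 0$, so $k_{21}'=0$. Because $d_1'\ne0$, this forces $a_1'=0$. Hence $k_{23}'=0$, so $h_{23}'=0$, and therefore $h_{23}=h_{23}'=0$.

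\emph{Second implication.} Here $d_1'=d_2'=0$, so $M_{c',d'}=\mathrm{diag}(c_1',c_2',c_1',c_2')$. Thus $H'$ is obtained from $M_{a',b'}$ by dividing its rows by $c_1',c_2',c_1',c_2'$ respectively. Reading off the rows of $M_{a',b'}$ gives
\[
h_{23}'=\frac{b_1'}{c_2'},\qquad h_{41}'=\frac{b_1'}{c_2'},
\]
so $h_{23}=h_{41}$. It therefore suffices to show $h_{41}=0$, and this we compute on the unprimed side. From (1),
\[
k_{41}=-(a_1d_1-b_1c_1)(c_1c_2-d_1d_2),
\]
while $k_{21}=-d_1^2(a_1d_1-b_1c_1)$. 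Since $h_{21}=0$ we have $k_{21}=0$, and since $d_1\ne0$ this gives $a_1d_1-b_1c_1=0$. Consequently $k_{41}=0$, so $h_{41}=0$, hence $b_1'=0$ and $h_{23}=0$.

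These two cases are exactly the situations the lemma records. The hypothesis $d_1d_2\ne0$ is used only through $d_1\ne0$. The hypothesis $c_1'c_2'\ne0$ is what makes $M_{c',d'}$ invertible, so that $H'$ has the stated diagonal-scaled form.

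The only step needing care is the second implication, where the primed data alone give no information about $b_1'$. The key is to notice that in $H'$ the entries $h_{23}'$ and $h_{41}'$ coincide, and then to transfer the vanishing of $h_{21}$ to $h_{41}$ through the common factor $a_1d_1-b_1c_1$ in $k_{21}$ and $k_{41}$ (as in Lemma 4.2). The remaining work is routine reading of the entries from (1).
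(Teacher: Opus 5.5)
Your proposal is correct and is essentially the paper's proof. The first case uses $k_{21}'=-a_1'd_1'^3$ and $k_{23}'=a_1'd_1'^2d_2'$ exactly as the paper does. The second case uses $h_{23}'=h_{41}'=b_1'/c_2'$, which the paper phrases as $k_{41}'=k_{23}'=b_1'c_1'^2c_2'$, together with $a_1d_1-b_1c_1=0$ obtained from $k_{21}=0$ and $d_1\neq 0$ via Lemma 4.2.
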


\begin{proof}
Suppose
$c_1'=0,\; c_2'=0,\;d_1'\ne 0,\; d_2'\ne 0$.
By (1) we have
$k_{21}'=-a_1'd_1^{\prime 3}$,
$k_{23}'=a_1'd_1^{\prime2}d_2'$.
Suppose further that $h_{21}=0$. Then $k_{21}'=0$, so $a_1'=0$, hence $k_{23}'=0$. Thus $h_{23}=0$. 

Suppose next that 
$c_1'\ne 0, \; c_2'\ne 0, \; d_1'=0, \; d_2'=0$.
Then, by (1),
$k'_{41}=k'_{23}=b_1'c_1^{\prime 2}c_2'$.
Suppose further that $h_{21}=0$ and
$d_1\ne 0$, $d_2\ne 0$.
Then, by Lemma 4.2,
$a_1d_1-b_1c_1=0$. Hence $h_{41}=0$, so $k'_{41}=0$.
Then $k_{23}'=0$,  $h_{23}=0$. 
\end{proof}

Now we examine the four cases in order.

Case (1) $h_{21}\ne 0$, $h_{12}\ne 0$.
Then
\begin{align*}
&a_1d_1-b_1c_1\ne 0,\; a_2d_2-b_2c_2\ne 0, \; d_1\ne 0, \; d_2\ne 0,\\
&a_1'd_1'-b_1'c_1'\ne 0, \; a_2'd_2'-b_2'c_2'\ne 0,\; d_1'\ne 0,\; d_2'\ne 0.
\end{align*}
By (1) 
\begin{align*}
\frac{k_{42}}{k_{12}}&=-\frac{c_1}{d_2}, \displaybreak[0]\\
\frac{k_{31}}{k_{21}}&=-\frac{c_2}{d_1}, \displaybreak[0]\\
\frac{k_{41}}{k_{21}}&=\frac{c_1c_2-d_1d_2}{d_1^2}, \displaybreak[0]\\
\frac{k_{32}}{k_{12}}&=\frac{c_1c_2-d_1d_2}{d_2^2}.
\end{align*}
By the assumption $H=H'$ it follows that
\begin{align*}
\frac{c_1}{d_2}&=\frac{c_1'}{d_2'},\tag2 \displaybreak[0]\\
\frac{c_2}{d_1}&=\frac{c_2'}{d_1'},\tag3 \displaybreak[0]\\
\frac{c_1c_2-d_1d_2}{d_1^2}&=\frac{c_1'c_2'-d_1'd_2'}{d_1^{\prime2}}, \tag4 \displaybreak[0]\\
\frac{c_1c_2-d_1d_2}{d_2^2}&=\frac{c_1'c_2'-d_1'd_2'}{d_2^{\prime2}}. \tag5\\
\end{align*}

We distinguish two cases:

(1.1) $h_{41}\ne 0$.

(1.2) $h_{41}=0$.

Case (1.1) $h_{41}\ne 0$.
Then
$c_1c_2-d_1d_2\ne 0, \; c_1'c_2'-d_1'd_2'\ne 0$, and
$$
\frac{k_{41}}{k_{21}}/\frac{k_{32}}{k_{12}}=\frac{d_2^2}{d_1^2}.
$$
Hence
$$
\frac{d_2^2}{d_1^2}
=\frac{d_2^{\prime2}}{d_1^{\prime2}}.
$$
So we have either
\begin{align*}
\frac{d_2}{d_1}&= \frac{d_2'}{d_1'}\tag6\\
\text{or}\quad
\frac{d_2}{d_1}&= -\frac{d_2'}{d_1'}. \tag7
\end{align*}

Case where (6) holds.
 (2), (3), (6) show that
$(c_1,c_2,d_1,d_2)$ is proportional to  $(c_1',c_2',d_1',d_2')$.
We can write
$(c',d')=t (c,d)$
with $t\in F^\times$.
Then $M_{c',d'}=t M_{c,d}$. As $M_{c,d}^{-1}M_{a,b}=M_{c',d'}^{-1}M_{a',b'}$, we have
$M_{a',b'}=t M_{a,b}$.
Then 
$(a',b')=t (a,b)$.
Thus (i) of the theorem holds.

Case where (7) holds  and $\text{char}(F)\ne 2$.
We can write $d_1'=t d_1$, $d_2'=-t d_2$ with $t\in F^{\times}$. Then, by (2) and (3), 
$c_1'=-tc_1$, $c_2'=t c_2$.
Then
$$
\text{RHS of (4)}=\frac{t(-t)(c_1c_2-d_1d_2)}{t^2d_1^2}
=-\frac{c_1c_2-d_1d_2}{d_1^2}
=-\text{(LHS of (4))}.
$$
As $\text{char}(F)\ne 2$,  the LHS of (4) must be zero, namely
$c_1c_2-d_1d_2=0$. This contradicts the assumption of the present case.

This settles Case (1.1).

\medskip
Case (1.2) $h_{41}=0$.
Then 
$c_1c_2-d_1d_2=0$, 
so
$c_1c_2=d_1d_2\ne 0$.
We can write
$d_2=l c_1$, $d_1=l^{-1}c_2$ with $l\in F^{\times}$.
Similarly
$c_1'c_2'=d_1'd_2'\ne 0$.
By (2) and (3) we have
$d_2'=l c_1'$, $d_1'=l^{-1}c_2'$.

By (1) we have  
\begin{align*}
k_{11}&=-a_1c_1c_2^2, \displaybreak[0]\\
k_{21}&=-l^{-3}c_2^2(a_1c_2-l b_1c_1), \displaybreak[0]\\
k_{22}&=-a_2c_1^2c_2, \displaybreak[0]\\
k_{33}-k_{11}&=c_1c_2^2(a_1-l^{-1}b_2), \displaybreak[0]\\
k_{44}-k_{22}&=c_1^2c_2(a_2-l b_1).\\
\end{align*}
And
$$
\det M_{c,d}=-c_1^2c_2^2.
$$
Hence
\begin{align*}
h_{11}&=\frac{a_1}{c_1}, \displaybreak[0]\\
h_{22}&=\frac{a_2}{c_2}, \displaybreak[0]\\
h_{33}-h_{11}&=-\frac{a_1-l^{-1}b_2}{c_1}, \displaybreak[0]\\
h_{44}-h_{22}&=-\frac{a_2-lb_1}{c_2}, \displaybreak[0]\\
h_{21}&=l^{-3}(\frac{a_1}{c_1}-l \frac{b_1}{c_2})\frac{c_2}{c_1}.
\end{align*}
By the assumption $H=H'$ it follows that
\begin{align*}
\frac{a_1}{c_1}&=\frac{a_1'}{c_1'},\tag8 \displaybreak[0]\\
\frac{a_2}{c_2}&=\frac{a_2'}{c_2'},\tag9 \displaybreak[0]\\
\frac{a_1-l^{-1}b_2}{c_1}&=\frac{a_1'-l^{-1}b_2'}{c_1'},\tag{10} \displaybreak[0]\\
\frac{a_2-lb_1}{c_2}&=\frac{a_2'-lb_1'}{c_2'},\tag{11} \displaybreak[0]\\
l^{-3}(\frac{a_1}{c_1}-l \frac{b_1}{c_2})\frac{c_2}{c_1}&=
l^{-3}(\frac{a_1'}{c_1'}-l \frac{b_1'}{c_2'})\frac{c_2'}{c_1'}.\tag{12}
\end{align*}
By (8), (9), (10), (11)
\begin{align}
\frac{b_2}{c_1}&=\frac{b_2'}{c_1'}, \tag{13}\\
\frac{b_1}{c_2}&=\frac{b_1'}{c_2'}. \tag{14}
\end{align}
By (8), (14),  (12) and the assumption $h_{21}\ne 0$
\begin{align*}
\frac{c_2}{c_1}=\frac{c_2'}{c_1'}. \tag{15}
\end{align*}
By (2), (3),  (8), (9), (13), (14), (15)  we see that
$(a,b,c,d)$ is proportional to $(a',b',c',d')$.
Thus (i) of the theorem holds.

This settles Case (1.2).

Consequently Case (1) results in (i) of the theorem.

\medskip
Case (2) $h_{21}=0$, $h_{12}\ne 0$.
By Lemmas 4.2 and 4.3 we have
\begin{align*}
&a_1d_1-b_1c_1=0,\; a_2 d_2-b_2 c_2\ne 0,\;c_1,c_2, d_1,d_2\ne 0,\\
&a_1'd_1'-b_1'c_1'=0, \; a_2' d_2'-b_2' c_2'\ne 0,\; c_1',c_2',d_1',d_2'\ne 0.
\end{align*}
We can write
$(b_1,d_1)=l (a_1,c_1)$, 
$(b_1',d_1')=l' (a_1',c_1')$
with $l,l'\in F^{\times}$.
Then
$$
\det M_{c,d}=c_1^2(c_2^2-3l c_2d_2+l^2 d_2^2).
$$
The entries of $K$ are given as follows.
\begin{align*}
k_{11}&=a_1c_1(c_2^2-3lc_2d_2+l^2 d_2^2),\\
k_{21}&=0,\\
k_{31}&=0,\\
k_{41}&=0,
\displaybreak[0]\\
\noalign{\smallskip}
k_{12}&=-d_2^2(a_2d_2-b_2c_2),\\
k_{22}&=c_1^2(a_2c_2-lb_2c_2-2l a_2d_2+l^2b_2d_2),\\
k_{32}&=-c_1(a_2d_2-b_2c_2)(c_2-ld_2),\\
k_{42}&=c_1d_2(a_2d_2-b_2c_2),
\displaybreak[0]\\
\noalign{\smallskip}
k_{13}&=d_2(a_1d_2-b_2c_1)(c_2-ld_2),\\
k_{23}&=-l^2c_1^2(a_1d_2-b_2c_1),\\
k_{33}&=c_1(a_1c_2^2-l b_2c_1c_2-2l a_1c_2d_2+l^2a_1d_2^2),\\
k_{43}&=-c_1(a_1d_2-b_2c_1)(c_2-ld_2),
\displaybreak[0]\\
\noalign{\smallskip}
k_{14}&=-c_1c_2(a_2d_2-b_2c_2)-2l b_2c_1c_2d_2+ld_2^2(a_1c_2+a_2c_1),\\
k_{24}&=-l^2c_1^2(a_1c_2-a_2c_1-l(a_1d_2-b_2c_1)),\\
k_{34}&=lc_1c_2(a_1c_2-a_2c_1-l(a_1d_2-b_2c_1)),\\
k_{44}&=c_1(a_2c_1c_2-ld_2(a_1c_2+a_2c_1)-lb_2c_1(c_2-ld_2)).
\tag{16}
\end{align*}
In particular
\begin{align*}
\frac{k_{42}}{k_{12}}&=-\frac{c_1}{d_2},\\
\frac{k_{32}}{k_{42}}&=-\frac{c_2}{d_2}+l.
\end{align*}
Hence
\begin{align*}
\frac{c_1}{d_2}&=\frac{c_1'}{d_2'},\tag{17}\\
-\frac{c_2}{d_2}+l&=-\frac{c_2'}{d_2'}+l'. \tag{18}
\end{align*}
By (17) we can write
$d_2=m c_1$,  $d_2'=m c_1'$ with $m\in F^{\times}$.
Then (18) becomes
\begin{align*}
-\frac{1}{m}\frac{c_2}{c_1}+l=-\frac{1}{m}\frac{c_2'}{c_1'}+l'. \tag{19}
\end{align*}
We distinguish two cases:

(2.1) $h_{23}\ne 0$.

(2.2) $h_{23}=0$.

Case (2.1) $h_{23}\ne 0$.  Then
$a_1d_2-b_2c_1\ne 0$ and
$$
\frac{k_{13}}{k_{23}}=-\frac{d_2(c_2-ld_2)}{l^2c_1^2}
=-\frac{1}{l^2}\frac{d_2^2}{c_1^2}(\frac{c_2}{d_2}-l)
=\frac{m^2}{l^2}(-\frac{c_2}{d_2}+l).
$$
We further distinguish two cases:

(2.1.1) $h_{32}\ne 0$.

(2.1.2) $h_{32}=0$.

Case (2.1.1) $h_{32}\ne 0$.
Then $0\ne c_2-ld_2=c_2-lm c_1$ and
$$
\frac{k_{13}}{k_{23}}/\frac{k_{32}}{k_{42}}=\frac{m^2}{l^2}.
$$
Hence
$$
\frac{m^2}{l^2}=\frac{m^2}{l^{\prime2}}.
$$
We have either
\begin{align*}
l&= l'\tag{20}\\
\text{or}\quad l&=-l'. \tag{21}
\end{align*}

Case where (20) holds.
Then, by (19)
$$
\frac{c_2}{c_1}=\frac{c_2'}{c_1'}. 
$$
We can write $c_2=n c_1$, $c_2'=n c_1'$ with $n\in F^{\times}$.
The substitution 
$c_2=n c_1$, $d_2=m c_1$ in (16) gives
\begin{align*}
k_{11}&=(l^2m^2-3lmn+n^2)a_1c_1^3,\\
k_{22}&=-c_1^3((2lm-n)a_2-(lm-n)lb_2),\\
k_{42}&=mc_1^3(ma_2-nb_2),\\
k_{23}&=-l^2c_1^3(ma_1-b_2).
\end{align*}
And
$$
\det M_{c,d}=(l^2m^2-3lmn+n^2)c_1^4,
$$
$$
mk_{22}+\frac{2lm-n}{m}k_{42}=(l^2m^2-3lmn+n^2)b_2c_1^3.
$$
It follows that
\begin{align*}
h_{11}&=\frac{a_1}{c_1},\\
h_{42}&=\frac{m}{l^2m^2-3lmn+n^2} \frac{ma_2-nb_2}{c_1},\\
mh_{22}+\frac{2lm-n}{m}h_{42}&=\frac{b_2}{c_1}.
\end{align*}
Hence
\begin{align*}
\frac{a_1}{c_1}&=\frac{a_1'}{c_1'},\tag{22}\\
\frac{ma_2-nb_2}{c_1}&=\frac{ma_2'-nb_2'}{c_1'},\tag{23}\\
\frac{b_2}{c_1}&=\frac{b_2'}{c_1'}.\tag{24}
\end{align*}
By (23), (24)
\begin{align*}
\frac{a_2}{c_1}=\frac{a_2'}{c_1'}.\tag{25}
\end{align*}
(22), (24), (25) show that
$(a_1,a_2,b_2,c_1)$ is proportional to  $(a_1',a_2',b_2',c_1')$.
Recalling the equalities
$(b_1,d_1)=l(a_1,c_1)$, $(b_1',d_1')=l(a_1',c_1')$,
$d_2=mc_1$, $d_2'=mc_1'$,
$c_2=nc_1$, $c_2'=nc_1'$,
we conclude that $(a, b,c,d)$ is proportional to  $(a',b',c',d')$.
Thus (i) of the theorem holds.

Case where (21) holds and $\text{char}(F)\ne 2$.
We distinguish cases according as $h_{24}=0$ or not.

Case where  $h_{24}\ne 0$. Then, by (16),
$$
\frac{k_{24}}{k_{34}}=-l \frac{c_1}{c_2}.
$$
Hence
$$
 -l \frac{c_1}{c_2}=-l' \frac{c_1'}{c_2'}.
$$
By (21) we have
$$
\frac{c_1}{c_2}=-\frac{c_1'}{c_2'} 
$$
and so
$$
-\frac{1}{m}\frac{c_2}{c_1}+l=-(-\frac{1}{m}\frac{c_2'}{c_1'}+l').
$$
Compared with (19), this gives  
$$
-\frac{1}{m}\frac{c_2}{c_1}+l=0,
$$
which contradicts the assumption $c_2-l m c_1\ne 0$ of Case (2.1.1).

Thus this case is excluded.

\medskip
Case where $h_{24}= 0$.
Then, by (16),  $a_1c_2-a_2c_1=l(a_1d_2-b_2c_1)$. 
Using this, we compute
\begin{align*}
k_{33}-k_{44}&=c_1((a_1c_2-a_2c_1)(c_2-ld_2)+l^2(a_1d_2-b_2c_1)d_2)\\
&=c_1(l(a_1d_2-b_2c_1)(c_2-ld_2)+l^2(a_1d_2-b_2c_1)d_2)\\
&=lc_1c_2(a_1d_2-b_2c_1).
\end{align*}
As we are in Case (2.1),
$k_{23}=-l^2c_1^2(a_1d_2-b_2c_1)\ne 0$,
so
$$
\frac{k_{33}-k_{44}}{k_{23}}=-\frac{1}{l}\frac{c_2}{c_1}.
$$
Hence
$$
-\frac{1}{l}\frac{c_2}{c_1}=-\frac{1}{l'}\frac{c_2'}{c_1'}.
$$
By (21),
\begin{align*}
\frac{c_2}{c_1}=-\frac{c_2'}{c_1'}
\end{align*}
and so
$$
-\frac{1}{m}\frac{c_2}{c_1}+l=-(-\frac{1}{m}\frac{c_2'}{c_1'}+l').
$$
This and (19)  again yield 
$c_2-l m c_1=0$, 
contradicting the assumption of Case (2.1.1).

This settles Case (2.1.1).

\medskip
Case (2.1.2) $h_{32}=0$. Then 
$c_2=l d_2=l m c_1$, $c_2'=l' d_2'=l' m c_1'$ and
$0\ne a_2d_2-b_2c_2=(a_2-l b_2)d_2$, 
so
$a_2-l b_2\ne 0$.
Similarly
$a_2'-l' b_2'\ne 0$.
Substituting $d_2=m c_1$, $c_2=l m c_1$ in (16),
we have 
\begin{align*}
k_{11}&=-l^2 m^2 a_1c_1^3, \\
k_{12}&=-m^3 c_1^3(a_2-l b_2),\\
k_{22}&=-l m a_2c_1^3,\\
k_{33}&=-l^2 m b_2c_1^3,\\
k_{24}&=l^2c_1^3(a_2-l b_2).
\end{align*}
And
$$
\det M_{c,d}=-l^2m^2c_1^4,
$$
$$
\frac{k_{12}}{k_{24}}=-\frac{m^3}{l^2}.
$$
Hence
$$
-\frac{m^3}{l^2}=-\frac{m^3}{l^{\prime2}}
$$
so
$$
l^2=l^{\prime 2}.
$$
We have either
\begin{align*}
l&= l' \tag{26}\\
\text{or}\quad l&=-l'. \tag{27}
\end{align*}
Also we have
\begin{align*}
h_{11}&=\frac{a_1}{c_1},\\
h_{22}&=\frac{1}{lm}\frac{a_2}{c_1},\\
h_{33}&=\frac{1}{m}\frac{b_2}{c_1}.
\end{align*}
Hence
\begin{align*}
\frac{a_1}{c_1}&=\frac{a_1'}{c_1'},\tag{28}\\
\frac{1}{l}\frac{a_2}{c_1}&=\frac{1}{l'}\frac{a_2'}{c_1'},\tag{29}\\
\frac{b_2}{c_1}&=\frac{b_2'}{c_1'}.\tag{30}
\end{align*}

Case where (26) holds. 
Then, by (29)
\begin{align*}
\frac{a_2}{c_1}=\frac{a_2'}{c_1'}. \tag{31}
\end{align*}
(28), (30), (31) show that
$(a_1,a_2,b_2,c_1)$ is proportional to $(a_1',a_2',b_2',c_1')$.
Recalling that
$(b_1,d_1)=l(a_1,c_1)$, $(b_1',d_1')=l(a_1',c_1')$, $c_2=l d_2$, $d_2=m c_1$,
$c_2'=l d_2'$, $d_2'=m c_1'$, 
we conclude that
$(a,b,c,d)$ is proportional to $(a',b',c',d')$.

Case where (27) holds and $\text{char}(F)\ne 2$.
We have
$$
h_{12}=\frac{m}{l^2}(\frac{a_2}{c_1}-l\frac{b_2}{c_1}).
$$
Hence
$$
\frac{m}{l^2}(\frac{a_2}{c_1}-l\frac{b_2}{c_1})=
\frac{m}{l^{\prime2}}(\frac{a_2'}{c_1'}-l'\frac{b_2'}{c_1'}).
$$
This and  (27) give
\begin{align*}
\frac{a_2}{c_1}-l\frac{b_2}{c_1}=\frac{a_2'}{c_1'}-l'\frac{b_2'}{c_1'}.\tag{32}
\end{align*}
By (29)
$$
\frac{a_2}{c_1}=-\frac{a_2'}{c_1'}.
$$
This and (27), (30) give
$$
\frac{a_2}{c_1}-l\frac{b_2}{c_1}=-(\frac{a_2'}{c_1'}-l'\frac{b_2'}{c_1'}).
$$
This and (32) imply
$$
\frac{a_2}{c_1}-l\frac{b_2}{c_1}=0,
$$
which contradicts $a_2-lb_2\ne 0$.

This settles Case (2.1.2).

We finish Case (2.1).

\medskip
Case (2.2) $h_{23}=0$. Then $a_1d_2-b_2c_1=0$, $a_1'd_2'-b_2'c_1'=0$.
Recall   
$c_1c_2d_1c_2\ne 0$, $c_1'c_2'd_1'd_2'\ne 0$, and
$d_2=m c_1$, $d_2'=m c_1'$.
So we have
$(b_2,d_2)=m(a_1,c_1)$, $(b_2',d_2')=m(a_1',c_1')$.
Then
$0\ne a_2d_2-b_2c_2=m(a_2c_1-a_1c_2)$, 
so
$a_2c_1-a_1c_2\ne 0$.

Substituting the above expressions for $b_2,d_2$ into (16), we have 
\begin{align*}
k_{11}&=a_1c_1(c_2^2-3lm c_1c_2+l^2m^2 c_1^2),\\
k_{12}&=m^3c_1^2(a_1c_2-a_2c_1),\\
k_{22}&=-c_1^2(-a_2c_2+2lm a_2c_1+lm a_1c_2-l^2m^2 a_1c_1),\\
k_{42}&=-m^2c_1^2(a_1c_2-a_2c_1),\\
k_{24}&=-l^2c_1^2(a_1c_2-a_2c_1),\\
k_{34}&=lc_1c_2(a_1c_2-a_2c_1),\\
k_{44}&=-c_1^2(-a_2c_2+lm a_2c_1+2lm a_1c_2-l^2m^2 a_1c_1).
\end{align*}
And we have
$$
\det M_{c,d}=c_1^2(c_2^2-3lm c_1c_2+l^2m^2 c_1^2).
$$
So
$$
h_{11}=\frac{a_1}{c_1}.
$$
Hence
\begin{align*}
\frac{a_1}{c_1}=\frac{a_1'}{c_1'}.\tag{33}
\end{align*}
Also
$$
k_{22}-k_{44}=lm c_1^2(a_1c_2-a_2c_1),
$$
$$
\frac{k_{22}-k_{44}}{k_{24}}=-\frac{m}{l}.
$$
Hence
$$
\frac{m}{l}=\frac{m}{l'},
$$
so
\begin{align*}
l=l'.\tag{34}
\end{align*}
We have
$$
\frac{k_{34}}{k_{42}}=-\frac{l}{m^2}\frac{c_2}{c_1}.
$$
Hence
\begin{align*}
\frac{l}{m^2}\frac{c_2}{c_1}=\frac{l'}{m^2}\frac{c_2'}{c_1'},
\end{align*}
so
\begin{align*}
\frac{c_2}{c_1}=\frac{c_2'}{c_1'}.\tag{35}
\end{align*}
And we have
\begin{align*}
\frac{k_{11}}{k_{12}}&=\frac{a_1c_1(c_2^2-3lm c_1c_2+l^2m^2 c_1^2)}
{m^3c_1^2(a_1c_2-a_2c_1)}\\
&=\frac{1}{m^3}\frac{a_1}{c_1}\frac{c_2}{c_1}\frac{1-3lm\frac{c_1}{c_2}+l^2m^2\frac{c_1^2}{c_2^2}}{\frac{a_1}{c_1}-\frac{a_2}{c_2}}.
\end{align*}
Hence
\begin{align*}
\frac{1}{m^3}\frac{a_1}{c_1}\frac{c_2}{c_1}\frac{1-3lm\frac{c_1}{c_2}+l^2m^2\frac{c_1^2}{c_2^2}}{\frac{a_1}{c_1}-\frac{a_2}{c_2}}=
\frac{1}{m^3}\frac{a_1'}{c_1'}\frac{c_2'}{c_1'}\frac{1-3l'm\frac{c_1'}{c_2'}+l^{\prime2}m^2\frac{c_1^{\prime2}}{c_2^{\prime2}}}{\frac{a_1'}{c_1'}-\frac{a_2'}{c_2'}}.
\tag{36}
\end{align*}
(33), (34), (35), (36) give
\begin{align*}
\frac{a_2}{c_2}=\frac{a_2'}{c_2'}.
\end{align*}
Consequently $(a_1,a_2,c_1,c_2)$ is proportional to $(a_1',a_2',c_1',c_2')$.
As we know
\begin{alignat*}{2}
(b_2,d_2)&=m(a_1,c_1), \quad&(b_2',d_2')&=m(a_1',c_1'),\\
(b_1,d_1)&=l(a_1,c_1), & (b_1',d_1')&=l(a_1',c_1'),
\end{alignat*}
it follows that
$(a,b,c,d)$ is proportional to $(a',b',c',d')$.

Thus (i) of the theorem holds.

This settles Case (2.2).

Thus Case (2) also results in (i) of the theorem.

\medskip
Case (3) $h_{21}\ne 0$, $h_{12}=0$. This case is reduced to Case (2).
The automorphism $\sigma\colon(x_1,x_2)\mapsto (x_2,x_1)$ of $F^2$ induces an automorphism $(a,b)\mapsto (a^{\sigma}, b^{\sigma})$ of $A$, which we denote by 
$\tilde\sigma$.
This is represented by the matrix
$$
\begin{pmatrix}
0 & 1 & 0 & 0\\
1 & 0 & 0 & 0\\
0 & 0 & 0 & 1\\
0 & 0 & 1 & 0
\end{pmatrix},
$$
which we denote by $T$.
Since $R_{x^{\tilde\sigma}}\tilde\sigma=\tilde\sigma R_{x}$, we have
$T M_{a^{\sigma}, b^{\sigma}}=M_{a,b} T$.
The conjugation by $T$ transforms the equation $M_{c,d}^{-1}M_{a,b}=M_{c',d'}^{-1}M_{a',b'}$ into the equation
$M_{c^{\sigma},d^{\sigma}}^{-1}M_{a^{\sigma},b^{\sigma}}=M_{c^{\prime\sigma},d^{\prime\sigma}}^{-1}M_{a^{\prime\sigma},b^{\prime\sigma}}$, 
and  interchanges the (1,2)-entry and (2,1)-entry of a matrix.

Therefore Case (3) is translated to Case (2) for $a^{\sigma}, b^{\sigma}, c^{\sigma}, d^{\sigma}$.
So (i) of the theorem holds for $a^{\sigma}, b^{\sigma}, c^{\sigma}, d^{\sigma}$, hence for
$a,b,c,d$.

\medskip
Case (4) $h_{21}=0$, $h_{12}=0$.
As observed earlier, one of Cases (I), (II), (III) occurs
and  one of Cases (I'), (II'), (III') occurs.
 By combination of these cases 
there arise six cases:

(4-I) $c_1c_2d_1d_2\ne 0$.

(4-I') $c_1'c_2'd_1'd_2'\ne 0$.

(4-II-II') $d_1d_2\ne 0$, $c_1=c_2=0$,  $d_1'd_2'\ne 0$, $c_1'=c_2'=0$.

(4-III-III') $c_1c_2\ne 0$, $d_1=d_2=0$, $c_1'c_2'\ne 0$, $d_1'=d_2'=0$.

(4-II-III') $d_1d_2\ne 0$, $c_1=c_2=0$, $c_1'c_2'\ne 0$, $d_1'=d_2'=0$.

(4-III-II') $c_1c_2\ne 0$, $d_1=d_2=0$, $d_1'd_2'\ne 0$, $c_1'=c_2'=0$.

Case (4-I) $c_1c_2d_1d_2\ne 0$.
Then, by Lemma 4.2 we have
$a_1d_1-b_1c_1=0$, $a_2d_2-b_2c_2=0$.
We can write
$(b_1,d_1)=l_1(a_1,c_1)$, $(b_2,d_2)=l_2(a_2,c_2)$
with $l_1,l_2\in F^{\times}$.
By (1) we have 
\begin{align*}
k_{11}&=(1-3l_1l_2+l_1^2l_2^2)a_1c_1c_2^2,\\
k_{22}&=(1-3l_1l_2+l_1^2l_2^2)a_2c_1^2c_2,\\
k_{13}&=l_2^2(1-l_1l_2)c_2^2(a_1c_2-a_2c_1),\\
k_{23}&=-l_1^2l_2c_1^2(a_1c_2-a_2c_1),\\
k_{43}&=-l_2(1-l_1l_2)c_1c_2(a_1c_2-a_2c_1),\\
k_{14}&=l_1l_2^2c_2^2(a_1c_2-a_2c_1),\\
k_{24}&=-l_1^2(1-l_1l_2)c_1^2(a_1c_2-a_2c_1),\\
k_{34}&=l_1(1-l_1l_2)c_1c_2(a_1c_2-a_2c_1).
\tag{36}
\end{align*}
And
$$
\det M_{c,d}=(1-3l_1l_2+l_1^2l_2^2)c_1^2c_2^2.
$$

We distinguish two cases:

(4-I.1) $h_{23}\ne 0$.

(4-I.2) $h_{23}=0$.

Case (4-I.1) $h_{23}\ne 0$. Then
$a_1c_2-a_2 c_1\ne 0$.
By Lemma 4.4  Case (I') must occur:
$c_1'c_2'd_1'd_2'\ne 0$.
Then, by Lemma 4.2,
$a_1'd_1'-b_1'c_1'=0$, $a_2'd_2'-b_2'c_2'=0$.
We can write
$(b_1',d_1')=l_1'(a_1',c_1')$, 
$(b_2',d_2')=l_2'(a_2',c_2')$
with $l_1',l_2'\in F^{\times}$.
By (36)
\begin{align*}
\frac{k_{24}}{k_{23}}&=\frac{1}{l_2}-l_1,\\
\frac{k_{13}}{k_{14}}&=\frac{1}{l_1}-l_2.
\end{align*}
Hence
\begin{align*}
\frac{1}{l_2}-l_1&=\frac{1}{l_2'}-l_1', \tag{37}\\
\frac{1}{l_1}-l_2&=\frac{1}{l_1'}-l_2'. \tag{38}
\end{align*}

We distinguish two cases: 

(4-I.1.1) $h_{24}\ne 0$.

(4-I.1.2) $h_{24}=0$.

Case (4-I.1.1) $h_{24}\ne 0$. Then $1-l_1l_2\ne 0$,
$1-l_1'l_2'\ne 0$, and
$$
\frac{k_{43}}{k_{34}}=-\frac{l_2}{l_1}.
$$
Hence
\begin{align*}
\frac{l_2}{l_1}=\frac{l_2'}{l_1'}.\tag{39}
\end{align*}
By (36)
\begin{align*}
h_{11}&=\frac{a_1}{c_1},\\
h_{22}&=\frac{a_2}{c_2}.
\end{align*}
Hence
\begin{align*}
\frac{a_1}{c_1}&=\frac{a_1'}{c_1'},\tag{40}\\
\frac{a_2}{c_2}&=\frac{a_2'}{c_2'}.\tag{41}
\end{align*}
Also by (36)
\begin{align*}
\frac{k_{13}}{k_{24}}&=-\frac{l_2^2}{l_1^2}\frac{c_2^2}{c_1^2},\\
\frac{k_{13}}{k_{43}}&=-l_2 \frac{c_2}{c_1},\\
\frac{k_{24}}{k_{34}}&=-l_1 \frac{c_1}{c_2}.
\end{align*}
Hence
\begin{align*}
\frac{l_2^2}{l_1^2}\frac{c_2^2}{c_1^2}&=\frac{l_2^{\prime2}}{l_1^{\prime2}}\frac{c_2^{\prime2}}{c_1^{\prime2}},\tag{42}\\
l_2 \frac{c_2}{c_1}&=l_2' \frac{c_2'}{c_1'},\tag{43}\\
l_1 \frac{c_1}{c_2}&=l_1' \frac{c_1'}{c_2'}.\tag{44}
\end{align*}
By (42) and (39) we have
$$
\frac{c_2^2}{c_1^2}=\frac{c_2^{\prime2}}{c_1^{\prime2}},
$$
so either
\begin{align*}
\frac{c_2}{c_1}&=\frac{c_2'}{c_1'}\tag{45}\\
\text{or}\quad\frac{c_2}{c_1}&=-\frac{c_2'}{c_1'}.\tag{46}
\end{align*}

Case where  (45) holds.
Then, by (43), (44) we have
$l_1=l_1'$, $l_2=l_2'$. 
(40), (41), (45) show that
$(a_1,a_2,c_1,c_2)$ is proportional to $(a_1',a_2',c_1',c_2')$, that is, 
$(a,c)$ is proportional to $(a',c')$.
Write
$(a',c')=t (a,c)$ with $t\in F^{\times}$.
Since
$(b_i,d_i)=l_i(a_i,c_i)$, $(b_i',d_i')=l_i'(a_i',c_i')$,
we have
$(b',d')=t (b,d)$.
Thus
$(a',b',c',d')=t (a,b,c,d)$.
(i) of the theorem holds.

Case where (46) holds and $\text{char}(F)\ne 2$.
By (43), (44) we have
$l_1=-l_1'$, $l_2=-l_2'$.
Then (37) becomes
$$
\frac{1}{l_2}-l_1=-(\frac{1}{l_2}-l_1),
$$
hence
$$
\frac{1}{l_2}-l_1=0.
$$
This contradicts the assumption of Case (4-I.1.1).

This settles (4-I.1.1).

Case (4-I.1.2)
$h_{24}=0$. Then
$1-l_1l_2=0$, $1-l_1'l_2'=0$, and
$$
\det M_{c,d}=-c_1^2c_2^2.
$$
By (36) we have 
\begin{align*}
k_{11}&=-a_1c_1c_2^2,\\
k_{22}&=-a_2c_1^2c_2,\\
k_{23}&=-l_1c_1^2(a_1c_2-a_2c_1).
\end{align*}
So
\begin{align*}
h_{11}&=\frac{a_1}{c_1},\\
h_{22}&=\frac{a_2}{c_2},\\
h_{23}&=-l_1\frac{c_1}{c_2}(\frac{a_1}{c_1}-\frac{a_2}{c_2}).
\end{align*}
Hence
\begin{align*}
\frac{a_1}{c_1}&=\frac{a_1'}{c_1'},\\
\frac{a_2}{c_2}&=\frac{a_2'}{c_2'},\\
l_1\frac{c_1}{c_2}(\frac{a_1}{c_1}-\frac{a_2}{c_2})&=
l_1'\frac{c_1'}{c_2'}(\frac{a_1'}{c_1'}-\frac{a_2'}{c_2'}).
\end{align*}
Recall that we are in Case (4.1): $h_{23}\ne 0$, i.e., $\frac{a_1}{c_1}-\frac{a_2}{c_2}\ne 0$.
It follows that
\begin{align*}
l_1\frac{c_1}{c_2}=l_1'\frac{c_1'}{c_2'}.
\end{align*}
Put
$\frac{c_1'}{c_1}=t_1$, $\frac{c_2'}{c_2}=t_2$. 
Then
$$
(a_1',c_1')=t_1(a_1,c_1), \;(a_2',c_2')=t_2(a_2,c_2)
$$
and
$$
l_1 t_2=l_1' t_1.
$$
Since
\begin{alignat*}{2}
(b_1,d_1)&=l_1(a_1,c_1), \quad & (b_1',d_1')&=l_1'(a_1',c_1'),\\
(b_2,d_2)&=l_1^{-1}(a_2,c_2), & (b_2',d_2')&=l_1^{\prime -1}(a_2',c_2'),
\end{alignat*}
it follows that
$$
(b_2',d_2')=t_1(b_2,d_2), \; (b_1',d_1')=t_2(b_1,d_1).
$$
Thus
\begin{align*}
(a_2,b_1,c_2,d_1)&=l_1(b_2,a_1,d_2,c_1),\\
(a_2',b_1',c_2',d_1')&=l_1'(b_2',a_1',d_2',c_1'),\\
(a_1',b_2',c_1',d_2')&=t_1(a_1,b_2,c_1,d_2),\\
(a_2',b_1',c_2',d_1')&=t_2(a_2,b_1,c_2,d_1).
\end{align*}
(iv) of the theorem holds.

This settles (4-I.1.2).

\medskip
Case (4-I.2) $h_{23}=0$. Then  $a_1c_2-a_2c_1=0$.
Write $(a_1,a_2)=m(c_1,c_2)$ with $m\in F$.
Since $(b_1,d_1)=l_1(a_1,c_1)$, $(b_2,d_2)=l_2(a_2,c_2)$,
we have  $(b_1,b_2)=m(d_1,d_2)$.  So
$(a,b)=m(c,d)$.
Then
$(a',b')=m(c',d')$ as well.
(ii) of the theorem holds.

This settles (4-I.2).

Thus in Case (4-I) the disjunction of (i), (ii), (iv) of the theorem holds.

Case (4-I') is treated similarly.

Before going into the remaining cases we observe that 
when  $d=0$, we have
\begin{align*}
H=\begin{pmatrix}
a_1/c_1 & 0 & 0 & b_2/c_1\\ 
0 & a_2/c_2 & b_1/c_2 & 0\\
0 & b_2/c_1 & a_1/c_1 & b_1/c_1\\ 
b_1/c_2 & 0 & b_2/c_2 & a_2/c_2
\end{pmatrix},
\tag{47}
\end{align*}
$$
\det M_{c,d}=c_1^2c_2^2,
$$
and that when $a=0$ and  $c=0$, we have
\begin{align*}  
H=\begin{pmatrix}
b_1/d_1 & 0 & -(b_1 d_2 -b_2 d_1)/d_1^2 & 0\\
0 & b_2/d_2 & 0 & (b_1 d_2 -b_2 d_1)/d_2^2\\
0 & 0 & b_1/d_1 & 0\\ 
0 & 0 & 0 & b_2/d_2
\end{pmatrix},
\tag{48}
\end{align*}  
$$
\det M_{c,d}=d_1^2d_2^2.
$$

Case (4-II-II') $d_1d_2\ne 0$, $c_1=c_2=0$,
$d_1'd_2'\ne 0$, $c_1'=c_2'=0$.
By Lemma 4.2
we have
$a_1d_1-b_1c_1=0$, $a_2d_2-b_2c_2=0$, 
hence
$a_1=0$, $a_2=0$.
Similarly
$a_1'=0$, $a_2'=0$.
Thus $a=c=0$, $a'=c'=0$.
We have by (48)
\begin{align*}
h_{11}&=\frac{b_1}{d_1},\\
h_{22}&=\frac{b_2}{d_2},\\
h_{13}&=(\frac{b_2}{d_2}-\frac{b_1}{d_1})\frac{d_2}{d_1}.
\end{align*}
Hence
\begin{align*}
\frac{b_1}{d_1}&=\frac{b_1'}{d_1'},\\
\frac{b_2}{d_2}&=\frac{b_2'}{d_2'},\\
(\frac{b_2}{d_2}-\frac{b_1}{d_1})\frac{d_2}{d_1}&=
(\frac{b_2'}{d_2'}-\frac{b_1'}{d_1'})\frac{d_2'}{d_1'}.
\end{align*}
Case where $h_{13}\ne 0$.
Then
$$
\frac{b_2}{d_2}-\frac{b_1}{d_1}=\frac{b_2'}{d_2'}-\frac{b_1'}{d_1'}\ne 0,
$$
hence
$$
\frac{d_2}{d_1}=\frac{d_2'}{d_1'}.
$$
So
$(b_1,b_2,d_1,d_2)$ is proportional to $(b_1',b_2',d_1',d_2')$.
Since $a=0$, $c=0$, $a'=0$, $c'=0$, we see that
$(a,b,c,d)$ is proportional to $(a',b',c',d')$.
Thus (i) of the theorem holds.

Case where $h_{13}=0$.
Then
$$
\frac{b_2}{d_2}-\frac{b_1}{d_1}=\frac{b_2'}{d_2'}-\frac{b_1'}{d_1'}=0.
$$
Put
$$
\frac{b_1}{d_1}=\frac{b_2}{d_2}=
\frac{b_1'}{d_1'}=\frac{b_2'}{d_2'}=m.
$$
Then
$(b_1,b_2)=m(d_1,d_2)$, $(b_1',b_2')=m(d_1',d_2')$, so
$b=m d$, $b'=m d'$.
Since $a=c=0$ and $a'=c'=0$, we have
$(a,b)=m(c,d)$, $(a',b')=m(c',d')$.
Thus (ii) of the theorem holds.

This settles (4-II-II').

\medskip

Case (4-III-III') $c_1c_2\ne 0$,  $d_1=d_2=0$,
$c_1'c_2'\ne 0$, $d_1'=d_2'=0$.
The matrices $H$ and $H'$ are of the form (47). The assumption $H=H'$ gives
\begin{alignat*}{3}
\frac{a_1}{c_1}&=\frac{a_1'}{c_1'}, \quad&
\frac{a_2}{c_2}&=\frac{a_2'}{c_2'}, \quad&
\frac{b_2}{c_2}&=\frac{b_2'}{c_2'},\\
\frac{b_1}{c_2}&=\frac{b_1'}{c_2'},&
\frac{b_2}{c_1}&=\frac{b_2'}{c_1'},&
\frac{b_1}{c_1}&=\frac{b_1'}{c_1'}.
\end{alignat*}
Write $c_1'=t_1c_1$, $c_2'=t_2c_2$ with $t_1, t_2\in F^{\times}$.
Then $a_1'=t_1a_1$, $a_2'=t_2a_2$, and
$b_1'=t_1b_1=t_2b_1$, 
$b_2'=t_1b_2=t_2b_2$.

Case where $b_1\ne 0$. Then $t_1=t_2$, so
$(a',b',c')=t_1(a,b,c)$, 
$(a',b',c',d')=t_1(a,b,c,d)$.
(i) of the theorem holds.

Case where $b_1=0$. Then $b_2=0$ by condition ($\star$). Then $b_1'=0$, $b_2'=0$.
Thus
$b=0$, $b'=0$, $d=0$, $d'=0$. We have
$(a_1',c_1')=t_1(a_1,c_1)$, $(a_2',c_2')=t_2(a_2,c_2)$.
Thus (iii) of the theorem holds.

This settles (4-III-III').

\medskip
Case (4-II-III') 
$d_1d_2\ne 0$, $c_1=c_2=0$,
$c_1'c_2'\ne 0$, $d_1'=d_2'=0$.
Then, as was shown in Case (4-II-II'), we have $a_1=0$, $a_2=0$.
The matrix $H$ is of the form (48) and $H'$ of the form (47). The assumption $H=H'$ then gives  
\begin{align*}
h_{41}&=0=\frac{b_1'}{c_2'}, \displaybreak[0]\\
h_{32}&=0=\frac{b_2'}{c_1'}, \displaybreak[0]\\
h_{13}&=(\frac{b_2}{d_2}-\frac{b_1}{d_1})\frac{d_2}{d_1}=0, \displaybreak[0]\\
h_{11}&=\frac{b_1}{d_1}=\frac{a_1'}{c_1'}, \displaybreak[0]\\
h_{22}&=\frac{b_2}{d_2}=\frac{a_2'}{c_2'}.
\end{align*}
Hence
$$
b_1'=0,\; b_2'=0, \; \frac{b_1}{d_1}-\frac{b_2}{d_2}=0.
$$
So
$$
\frac{b_1}{d_1}=\frac{b_2}{d_2}=\frac{a_1'}{c_1'}=\frac{a_2'}{c_2'}.
$$
Call this value $m$.
Then
$b=m d$, $a'=m c'$.
Since $a=c=0$ and $b'=d'=0$, we have
$(a,b)=m(c,d)$, 
$(a',b')=m(c',d')$.
Thus (ii) of the theorem holds.

This settles (4-II-III').

\medskip
Case (4-III-II') $c_1c_2\ne 0$, $d_1=d_2=0$, $d_1'd_2'\ne 0$, $c_1'=c_2'=0$.
By exchange $(a,b,c,d)\leftrightarrow (a',b',c',d')$
we  have the same conclusion as above.

Thus in each of Cases (4-II-II'), (4-II-III'), (4-III-II'), (4-III-III')  the disjunction of  (i), (ii), (iii) of the theorem holds.

This settles Case (4).

With all cases examined, the proof of the theorem is complete.

\end{proof}

\bigskip
We shall rewrite the equations of (iv) into a more symmetrical form
under an additional assumption.

\begin{thm}
Let $(a,b), (c,d), (a',b'), (c',d')\in A$.
Assume that the matrices $M_{a,b}$, $M_{c,d}$, $M_{a',b'}$, $M_{c',d'}$ are invertible.
Assume that each of $a,b,\dots,c',d'$ satisfies ($\star$).
Then  $M_{c,d}^{-1}M_{a,b}= M_{c',d'}^{-1}M_{a',b'}$ if and only if the one of the following holds:

(i) $(a',b',c',d')=t(a,b,c,d)$ for some $t\in F^{\times}$.

(ii) $(a,b,a',b')=m(c,d,c',d')$ for some $m\in F^{\times}$.

(iii) $b=b'=0$, $d=d'=0$, $(a_1',c_1')=t_1(a_1,c_1)$, $(a_2',c_2')=t_2(a_2,c_2)$ for some $t_1,t_2\in F^{\times}$.

(iv) 
There exist $l,m,t_1,t_2\in F^{\times}$ such that
\begin{align*}
(a_2,b_2,a_2',b_2')&=l(b_1,a_1,b_1',a_1'), \\
(c_2,d_2,c_2',d_2')&=m(d_1,c_1,d_1',c_1') ,\\
(c_1,d_1,c_1',d_1')&=t_1(a_1,b_1,a_1',b_1'), \\
(c_2,d_2,c_2',d_2')&=t_2(a_2,b_2,a_2',b_2'). 
\end{align*}
(These equations imply $lt_2=mt_1$.)

\end{thm}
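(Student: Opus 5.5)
The plan is to deduce this statement from Theorem 4.1. The only new ingredient is the invertibility of $M_{a,b}$ and $M_{a',b'}$. Conditions (i) and (iii) are literally the same as in Theorem 4.1, so I will check two things: that under the extra hypothesis condition (ii) forces $m\ne 0$, and that condition (iv) of Theorem 4.1 is equivalent to condition (iv) here. For (ii), if $(a,b)=m(c,d)$ then $M_{a,b}=mM_{c,d}$. Since $M_{a,b}$ is invertible, $m\ne 0$. Conversely (ii) with $m\in F^\times$ is a special case of (ii) of Theorem 4.1.

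The key preliminary step is to show that in either version of (iv) all sixteen coordinates $a_i,b_i,c_i,d_i,a_i',\dots,d_i'$ are nonzero.

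\emph{Old form.} Take (iv) of Theorem 4.1, so $a_2=lb_2$ and $b_1=la_1$. The computation in the sufficiency proof gives $\det M_{a,b}=-l^2a_1^2b_2^2$, which is the same formula as for $\det M_{c,d}$ with $a,b$ in place of $c,d$. Invertibility then gives $a_1b_2\ne0$. Similarly $c_1d_2\ne 0$, and likewise for the primed elements. Condition $(\star)$ upgrades this to $a_1a_2b_1b_2c_1c_2d_1d_2\ne0$, and the same for primes.

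\emph{New form.} I will compute $\det M_{a,b}$ when $a_2=lb_1$ and $b_2=la_1$. Substituting into $c_1^2c_2^2-3c_1c_2d_1d_2+d_1^2d_2^2$ (with $a,b$ in place of $c,d$) gives
\[
l^2(a_1^2b_1^2-3a_1^2b_1^2+a_1^2b_1^2)=-l^2a_1^2b_1^2 .
\]
Hence $a_1b_1\ne0$. Then $c=ta$ and $d=tb$ componentwise with $t_i\ne0$ give nonzero $c_i,d_i$, and the same holds for the primed elements.

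With everything nonzero, the translation between the two forms is a matter of ratios. Old form $\Rightarrow$ new form:
\begin{itemize}
\item Old (iv) says the pairs $(a_1,c_1)$ and $(b_1,d_1)$ are proportional with factor $l$, and the pairs $(b_2,d_2)$ and $(a_2,c_2)$ are proportional with factor $l$.
\item Hence $c_1/a_1=d_1/b_1$ and $c_2/a_2=d_2/b_2$. I will define new $t_1,t_2$ as these common ratios.
\item The scalings $(a_1',b_2',c_1',d_2')=t_1^{\rm old}(a_1,b_2,c_1,d_2)$ and $(a_2',b_1',c_2',d_1')=t_2^{\rm old}(a_2,b_1,c_2,d_1)$ show the primed elements have the same ratios $c_i'/a_i'$ and $d_i'/b_i'$.
\item Define new $l:=a_2/b_1$, which equals $lb_2/(la_1)=b_2/a_1$. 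It agrees with $a_2'/b_1'$ and $b_2'/a_1'$ because numerator and denominator scale by the same old $t$.
\item Symmetrically define $m:=c_2/d_1=d_2/c_1$.
\end{itemize}
The reverse implication is the same bookkeeping run backwards. I will set old $l:=b_1/a_1=d_1/c_1$, take $l'$ analogously for the primes, put old $t_1:=a_1'/a_1$ and old $t_2:=a_2'/a_2$, and verify the four vector identities from the coincidences of ratios. The relation $lt_2=mt_1$ follows from $c_2=t_2a_2=t_2lb_1$ together with $c_2=md_1=mt_1b_1$ and $b_1\ne0$.

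The step I expect to be the main obstacle is the nonvanishing argument: every ratio manipulation depends on it. I will check carefully that the determinant of $M_{a,b}$ genuinely specializes as claimed in each form, rather than relying only on the case already computed for $M_{c,d}$.
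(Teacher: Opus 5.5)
Your proposal is correct and follows the paper's route: reduce to Theorem 4.1, show that every coordinate is nonzero under either form of (iv) (the paper's Proposition 4.6, which you obtain from the specialized determinants $-l^2a_1^2b_2^2$ and $-l^2a_1^2b_1^2$ instead of the paper's $(\star)$-contradiction argument), and then match the two forms by ratio bookkeeping (Proposition 4.7, where the paper proves the converse by a variable-swap symmetry instead of your direct check). You also make explicit the upgrade from $m\in F$ to $m\in F^{\times}$ in (ii), which the paper leaves tacit.
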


For the proof it is enough to show that  
 (iv) of Theorem 4.1 and (iv) of Theorem 4.5 are equivalent
under the present assumption.
We shall show this below.

Suppose that each of $a,b,\dots,c',d'\in F^2$ satisfies ($\star$).
The two statements in question:

(X) There exist $l,l',t_1,t_2\in F^{\times}$ such that
\begin{align*}
(a_2,b_1,c_2,d_1)&=l(b_2,a_1,d_2,c_1), \\
(a_2',b_1',c_2',d_1')&=l'(b_2',a_1',d_2',c_1'), \\
(a_1',b_2',c_1',d_2')&=t_1(a_1,b_2,c_1,d_2), \\
(a_2',b_1',c_2',d_1')&=t_2(a_2,b_1,c_2,d_1). 
\end{align*}

(Y) There exist $l,m,t_1,t_2\in F^{\times}$ such that
\begin{align*}
(a_2,b_2,a_2',b_2')&=l(b_1,a_1,b_1',a_1'), \\
(c_2,d_2,c_2',d_2')&=m(d_1,c_1,d_1',c_1'), \\
(c_1,d_1,c_1',d_1')&=t_1(a_1,b_1,a_1',b_1'), \\
(c_2,d_2,c_2',d_2')&=t_2(a_2,b_2,a_2',b_2'). 
\end{align*}

\begin{prop} Suppose that $M_{a,b}$, $M_{c,d}$, $M_{a',b'}$, $M_{c',d'}$ are invertible. Then
either of (X) and (Y) implies that the all components of $a,b,\dots,c',d'$ are  nonzero.
\end{prop}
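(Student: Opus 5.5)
Under either (X) or (Y), the determinants of $M_{a,b}$ and $M_{c,d}$ become explicit monomials times nonzero constants. Invertibility then forces the remaining free components to be nonzero, and condition ($\star$) together with $l,l',m,t_1,t_2\in F^\times$ propagates nonvanishing to every component. Recall $\det M_{c,d}=c_1^2c_2^2-3c_1c_2d_1d_2+d_1^2d_2^2$, and the same formula holds for $M_{a,b}$ with $(a,b)$ in place of $(c,d)$.

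\emph{Case (X).} The relations $a_2=lb_2$ and $b_1=la_1$ give
$$\det M_{a,b}=l^2a_1^2b_2^2-3l^2a_1^2b_2^2+l^2a_1^2b_2^2=-l^2a_1^2b_2^2,$$
exactly as computed for $M_{c,d}$ in the sufficiency proof of Theorem 4.1(iv). Invertibility therefore gives $a_1b_2\neq 0$, hence $a_2=lb_2\neq0$ and $b_1=la_1\neq0$. The same computation with $c_2=ld_2$, $d_1=lc_1$ gives $\det M_{c,d}=-l^2c_1^2d_2^2$, so $c_1,d_2\neq0$ and then $c_2,d_1\neq0$. The primed elements follow either by repeating this with $l'$, or directly: the last two lines of (X) express each primed component as $t_1$ or $t_2$ times an unprimed one.

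\emph{Case (Y).} Here $a_2=lb_1$ and $b_2=la_1$, so
$$\det M_{a,b}=l^2\bigl(a_1^2b_1^2-3a_1^2b_1^2+a_1^2b_1^2\bigr)=-l^2a_1^2b_1^2.$$
Thus $a_1b_1\neq0$, and then $a_2,b_2\neq0$. Similarly $c_2=md_1$ and $d_2=mc_1$ give $\det M_{c,d}=-m^2c_1^2d_1^2$, so all components of $c,d$ are nonzero. The primed elements are handled the same way using the third and fourth coordinates of the first two lines of (Y), or via $M_{a',b'}$ and $M_{c',d'}$.

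The only potential obstacle is checking that the cross term genuinely collapses to $-1$ times the monomial in each case. This is a direct substitution, so in fact ($\star$) is not needed: invertibility alone suffices.
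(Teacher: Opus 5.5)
Your proof is correct, but it takes a different route from the paper's. The paper never uses the determinant. It combines the linear relations with the standing hypothesis ($\star$). In (X), for instance, if $a_1=0$ then $b_1=la_1=0$, so ($\star$) forces $a_2=b_2=0$, i.e.\ $(a,b)=0$, which contradicts invertibility of $M_{a,b}$. Hence $a_1\neq 0$, and ($\star$) together with the relations gives the remaining components. The paper therefore uses only the weak consequence $(a,b)\neq 0$ of invertibility, and relies on ($\star$) to make up the difference.

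You instead use the full strength of invertibility. Under the relations, $a_1^2a_2^2-3a_1a_2b_1b_2+b_1^2b_2^2$ collapses to $-l^2a_1^2b_2^2$ in (X) and to $-l^2a_1^2b_1^2$ in (Y), and likewise for $c,d$ and the primed pairs. The coefficient $1-3+1=-1$ is nonzero in every characteristic. Your substitutions are right. You also correctly use $M_{a',b'}$ or $M_{c',d'}$ for the primed components in (Y), where the relations with $t_1,t_2$ only link primed entries to primed entries.

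What your route buys is that the proposition holds without ($\star$). Combined with the following proposition, this gives the equivalence of (X) and (Y) from invertibility of the four matrices alone. What the paper's route buys is brevity. Since ($\star$) is a standing hypothesis of Section 4 anyway (it is needed for the necessity argument of Theorem 4.1, and it holds automatically for the elements $\tilde a,\tilde b,\dots$ in Section 5), nothing is lost by using it there.
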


\begin{proof}
Assume (X).
By the first equation of (X), if $a_1=0$, then $b_1=0$, so $a=0$, $b=0$ by ($\star$), contradicting the invertibility of  $M_{a,b}$. Thus $a_1\ne 0$, so $a_2\ne 0$.
Similarly $b_i\ne 0$, $c_i\ne 0$, etc.

Argue similarly for (Y).
\end{proof}

\begin{prop}
Suppose that the components of $a,b,\dots,c',d'$ are all nonzero.
Then (X) is equivalent to  (Y).
\end{prop}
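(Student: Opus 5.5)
Since all components of $a,b,\dots,c',d'$ are nonzero, every relation in (X) and (Y) can be read as an equality of ratios in $F^{\times}$. The plan is to define the scalars of one statement as explicit quotients of components, and then check the required equalities of quotients using the hypotheses of the other statement. No case distinctions arise, because nothing vanishes.

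\emph{(X) $\Rightarrow$ (Y).} Assume (X) with $l,l',t_1,t_2$. I will set
$$
L=\frac{a_2}{b_1},\quad m=\frac{c_2}{d_1},\quad T_1=\frac{c_1}{a_1},\quad T_2=\frac{c_2}{a_2},
$$
and verify each coordinate of (Y).
\begin{itemize}
\item For $L$: from $a_2=lb_2$ and $b_1=la_1$ we get $a_2/b_1=b_2/a_1$. For the primed part, $a_2'/b_1'=t_2a_2/(t_2b_1)=L$ and $b_2'/a_1'=t_1b_2/(t_1a_1)=L$.
\item For $m$: the identical computation with $c,d$ in place of $a,b$ gives $c_2/d_1=d_2/c_1$, and the primed ratios coincide.
\item For $T_1$: $d_1/b_1=lc_1/(la_1)=T_1$. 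Also $c_1'/a_1'=t_1c_1/(t_1a_1)=T_1$ and $d_1'/b_1'=t_2d_1/(t_2b_1)=d_1/b_1=T_1$.
\item For $T_2$: $d_2/b_2=(c_2/l)/(a_2/l)=T_2$. The primed ratios $c_2'/a_2'$ and $d_2'/b_2'$ reduce to these via the $t_2$ and $t_1$ relations respectively.
\end{itemize}

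\emph{(Y) $\Rightarrow$ (X).} Assume (Y) with $L,m,T_1,T_2$. I will set
$$
l=\frac{a_2}{b_2},\quad l'=\frac{a_2'}{b_2'},\quad t_1=\frac{a_1'}{a_1},\quad t_2=\frac{a_2'}{a_2}.
$$
\begin{itemize}
\item First equation of (X): $a_2=Lb_1$ and $b_2=La_1$ give $l=b_1/a_1$. Next, $c_2/d_2=T_2a_2/(T_2b_2)=l$ and $d_1/c_1=T_1b_1/(T_1a_1)=b_1/a_1=l$, which is the full first equation. The second equation follows identically for the primed quantities.
\item For $t_1$: $b_2'/b_2=La_1'/(La_1)=t_1$, $c_1'/c_1=T_1a_1'/(T_1a_1)=t_1$, and $d_2'/d_2=mc_1'/(mc_1)=t_1$.
\item For $t_2$: $b_1'/b_1=(a_2'/L)/(a_2/L)=t_2$, $c_2'/c_2=T_2a_2'/(T_2a_2)=t_2$, and $d_1'/d_1=T_1b_1'/(T_1b_1)=t_2$.
\end{itemize}

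The only real obstacle is bookkeeping: for each target ratio one must choose which hypothesis rewrites numerator and denominator with the same scalar, so that the scalar cancels. Nonvanishing of all components, which Proposition 4.6 guarantees under the invertibility hypotheses, is exactly what makes every quotient above legitimate.
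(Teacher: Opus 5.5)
Your proof is correct. Each coordinate check is a valid cancellation, and every scalar you define is a quotient of nonzero components, so it lies in $F^{\times}$.

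For (X)$\Rightarrow$(Y) you do essentially what the paper does, only more directly. The paper first introduces auxiliary ratios $\lambda,\mu,\lambda',\mu',\sigma_i,\tau_i$. It then proves the coincidences $\lambda=\lambda'$, $\mu=\mu'$, $\sigma_i=\tau_i$, the last one via the vanishing determinant $a_2c_1-b_2d_1=0$ supplied by the first equation of (X). You instead fix the four scalars of (Y) at the outset as explicit quotients and check each entry.

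The genuine difference is the converse. The paper does no computation for (Y)$\Rightarrow$(X). It observes that the relabeling $b_1\leftrightarrow b_2$, $c_i\leftrightarrow a_i'$, $d_1\leftrightarrow b_2'$, $d_2\leftrightarrow b_1'$, $d_1'\leftrightarrow d_2'$, $l'\leftrightarrow m$ is an involution carrying the equations of (X) onto those of (Y). It also preserves the nonvanishing hypothesis, so the converse is the forward implication in disguise. That halves the work, at the price of checking that the relabeling really permutes the equations. Your direct verification is longer but self-contained.

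Your bookkeeping also exposes a small redundancy: your forward direction never uses $l'$. The second equation of (X) already follows from the other three, with $l'=lt_2/t_1$, which matches the relation $lt_2=l't_1$ the paper notes. Dually, the second equation of (Y) is forced by the others, with $m=lt_2/t_1$. So in your converse the one use of $m$, in $d_2'/d_2=t_1$, could equally be replaced by the $T_2$ relation, giving $d_2'/d_2=b_2'/b_2$.
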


\begin{proof}
Assume (X).
\begin{align*}
(a_2,b_1,c_2,d_1)&=l(b_2,a_1,d_2,c_1), \tag1\\
(a_2',b_1',c_2',d_1')&=l'(b_2',a_1',d_2',c_1'), \tag2\\
(a_1',b_2',c_1',d_2')&=t_1(a_1,b_2,c_1,d_2), \tag3\\
(a_2',b_1',c_2',d_1')&=t_2(a_2,b_1,c_2,d_1) \tag4
\end{align*}
with $l,l',t_1,t_2\in F^{\times}$.
By (1)
we can write
\begin{align*}
(a_2,b_2)&=\lambda(b_1,a_1),\tag5\\
(c_2,d_2)&=\mu(d_1,c_1)\tag6
\end{align*}
with $\lambda, \mu\in F^{\times}$.
By (2) we have similarly
\begin{align*}
(a_2',b_2')&=\lambda'(b_1',a_1'),\tag7\\
(c_2',d_2')&=\mu'(d_1',c_1')\tag8
\end{align*}
with $\lambda',\mu'\in F^{\times}$.

(3) says
$(a_1',b_2')=t_1(a_1,b_2)$, which by  (5) and (7)  becomes
$(a_1',\lambda' a_1')=t_1(a_1, \lambda a_1)$.
Since $a_1, a_1'\ne 0$, this implies 
$\lambda'=\lambda$.
Similarly
$\mu'=\mu$.

(5) and (7) are combined into
$$
(a_2,b_2,a_2',b_2')=\lambda(b_1,a_1,b_1',a_1'),
$$
and (6) and (8)  into
$$
(c_2,d_2,c_2',d_2')=\mu(d_1,c_1,d_1',c_1').
$$

By (3)
we can write
\begin{align*}
(c_1,c_1')&=\sigma_1(a_1,a_1'),\tag9\\
(d_2,d_2')&=\tau_2(b_2,b_2')\tag{12}
\end{align*}
with $\sigma_1, \tau_2\in F^{\times}$.
By (4)  we have similarly  
\begin{align*}
(c_2,c_2')&=\sigma_2(a_2,a_2'),\tag{11}\\
(d_1,d_1')&=\tau_1(b_1,b_1')\tag{10}
\end{align*}
with $\sigma_2,\tau_1\in F^{\times}$.

By (1)
$$
\vmatrix a_2 & d_1 \\
b_2 & c_1
\endvmatrix=0.
$$
By (5), (9), (10) this becomes
$$
\lambda\vmatrix b_1 & \tau_1 b_1 \\ 
a_1 & \sigma_1 a_1 \endvmatrix=0.
$$
Hence
$\sigma_1=\tau_1$.
Then (9) and (10) are combined into
$$
(c_1,d_1,c_1',d_1')=\tau_1(a_1,b_1,a_1',b_1').
$$
Similarly $\sigma_2=\tau_2$ and
$$
(c_2,d_2,c_2',d_2')=\tau_2(a_2,b_2,a_2',b_2').
$$

We obtain
\begin{align*}
(a_2,b_2,a_2',b_2')&=\lambda(b_1,a_1,b_1',a_1'),\\
(c_2,d_2,c_2',d_2')&=\mu(d_1,c_1,d_1',c_1'),\\
(c_1,d_1,c_1',d_1')&=\tau_1(a_1,b_1,a_1',b_1'),\\
(c_2,d_2,c_2',d_2')&=\tau_2(a_2,b_2,a_2',b_2').
\end{align*}
Thus (Y) holds.
This proves that (X) implies (Y).

The replacement of variables
$$
b_1\leftrightarrow b_2,\;
c_i\leftrightarrow a_i',\;
d_1\leftrightarrow b_2', \;
d_2\leftrightarrow b_1', \;
d_1'\leftrightarrow d_2',\;
l'\leftrightarrow m 
$$
with $a_i,c_i', l,t_i$ unchanged
interchanges  the equations of (X) and  the equations of (Y).

Therefore the statement $(Y)\implies (X)$ is true as well as $(X)\implies (Y)$.

This proves the proposition.
 
\end{proof}

By Propositions 4.6 and 4.7 we obtain Theorem 4.5 from Theorem 4.4.

\section{The non-split algebra of type V}

Here we prove Theorem 2.1, the main result for the algebra of type V.

Let $E/F$ be a quadratic extension with nontrivial automorphism $\sigma$.
Let $A$ be the algebra of type V associated with  $E/F$. 
As in Section 3 take a splitting field $K$ of the extension $E/F$ with injections $i_1,i_2\colon E\to K$.
We have a $K$-algebra isomorphism $\iota\colon K\otimes E\to K^2$ taking $1\otimes a\mapsto
(i_1(a), i_2(a))$.
Let $\tau$ be the automorphism of $K^2$ taking $(x_1,x_2)\mapsto (x_2,x_1)$.
The isomorphism $\iota$ transforms $1\otimes \sigma$ into $\tau$.

Let $\tilde A$ be the split $K$-algebra of type V associated with the extension $K^2/K$. 
We have a $K$-algebra isomorphism
$K\otimes A\to \tilde A$ taking 
$$
1\otimes (a,b) \mapsto (\iota(1\otimes a), \iota(1\otimes b))
=((i_1(a), i_2(a)), (i_1(b), i_2(b))).
$$

Let $(x,y), (x',y')\in A^2$.
Suppose that $R_x$, $R_y$, $R_{x'}$, $R_{y'}$ are invertible.
We know (Proposition 1.1)
$$
A(x,y)=A(x',y')\iff R_xR_y^{-1}=R_{x'}R_{y'}^{-1}.
$$
Write $x=(a,b), y=(c,d), x'=(a',b'),y'=(c',d')$ with $a,b,\dots,c',d'\in E$.
Write also
$\tilde a=(i_1(a),i_2(a))$, $\tilde b=(i_1(b), i_2(b)) \in K^2$, 
$\tilde x=(\tilde a,\tilde b)\in \tilde A$, etc.
Each of $\tilde a, \tilde b, \dots,\tilde c',\tilde d'$ satisfies condition ($\star$) of Section 4.
$R_{\tilde x}$, $R_{\tilde y}$, $R_{\tilde x'}$, $R_{\tilde y'}$ are invertible.
Clearly
$$
R_xR_y^{-1}=R_{x'}R_{y'}^{-1}
\iff
R_{\tilde x} R_{\tilde y}^{-1}=R_{\tilde x'}R_{\tilde y'}^{-1}.
$$
Let $M_{\tilde a, \tilde b}$ be the matrix defined in Section 3.
Then
$$
R_{\tilde x} R_{\tilde y}^{-1}=R_{\tilde x'}R_{\tilde y'}^{-1}
\iff
M_{\tilde c, \tilde d}^{-1}M_{\tilde a, \tilde b}=
 M_{\tilde c', \tilde d'}^{-1}M_{\tilde a',\tilde b'}.
$$

By Theorem 4.5 the last equality holds if and only if one of the following  holds.

(i) $(\tilde a', \tilde b', \tilde c', \tilde d')=t(\tilde a, \tilde b, \tilde c, \tilde d)$ for some $t\in K^{\times}$.

(ii) $(\tilde a, \tilde b,\tilde a', \tilde b')=m(\tilde c, \tilde d, \tilde c', \tilde d')$ for some $m\in K^{\times}$.

(iii) $\tilde b=\tilde b'=0$, $\tilde d=\tilde d'=0$, 
$(i_1(a'), i_1(c'))=t_1(i_1(a), i_1(c))$, $(i_2(a'), i_2(c'))=t_2(i_2(a), i_2(c))$ for some $t_1,t_2\in K^{\times}$.

(iv)
 There exist $l,m,t_1,t_2\in K^{\times}$ such that
\begin{align*}
(i_2(a),i_2(b), i_2(a'),i_2(b'))&=l(i_1(b),i_1(a),i_1(b'),i_1(a')), \\
(i_2(c),i_2(d), i_2(c'),i_2(d'))&=m(i_1(d),i_1(c),i_1(d'),i_1(c')), \\
(i_1(c),i_1(d),i_1(c'),i_1(d'))&=t_1(i_1(a),i_1(b),i_1(a'),i_1(b')), \\
(i_2(c),i_2(d),i_2(c'),i_2(d'))&=t_2(i_2(a),i_2(b),i_2(a'),i_2(b')), \\
lt_2&=mt_1.
\end{align*}

Let us rewrite these equations into equations in $E$.

(i) 
$(\tilde a', \tilde b', \tilde c', \tilde d')=t(\tilde a, \tilde b, \tilde c, \tilde d)$. The equalities
$\tilde a'=t\tilde a$, $\tilde b'=t \tilde b$ mean that
$i_1(a')=t i_1(a)$, $i_2(a')=t i_2(a)$,  $i_1(b')=t i_1(b)$,  $i_2(b')=t i_2(b)$.
Since $x=(a,b)\ne 0$, we have $a\ne 0$ or $b\ne 0$.
When $a\ne 0$, we have
$t=i_1(a'/a)=i_2(a'/a)$,
hence $a'/a\in F$, $t\in F^{\times}$.
Similarly, when $b\ne 0$, we have $t\in F^{\times}$.
Thus
$(a',b',c',d')=t(a,b,c,d)$.
Namely $x'=tx$, $y'=ty$.

(ii)
$(\tilde a, \tilde b,\tilde a', \tilde b')=m(\tilde c, \tilde d, \tilde c', \tilde d')$. The equalities
$\tilde a=m \tilde c$, $\tilde b=m \tilde d$ mean that
$i_1(a)=m i_1(c)$, $i_2(a)= m i_2(c)$,  $i_1(b)=m i_1(d)$,  $i_2(b)=m i_2(d)$.
Since $y=(c,d)\ne 0$, we have $c\ne 0$ or $d\ne 0$.
When $c\ne 0$,
$m=i_1(a/c)=i_2(a/c)$,
hence
$a/c\in F$ and $m\in F^{\times}$.
Similarly when $d\ne 0$, we have $m\in F^{\times}$.
In any case $m\in F^{\times}$ and
$(a,b,a',b')=m(c,d,c',d')$.
Namely
$x=m y$, $x'=my'$

(iii)
$\tilde b=\tilde b'=0$, $\tilde d=\tilde d'=0$ and
$$
(i_1(a'), i_1(c'))=t_1(i_1(a), i_1(c)),\; (i_2(a'), i_2(c'))=t_2(i_2(a), i_2(c)).
$$ 
Then $b=b'=0$, $d=d'=0$, 
so $a,c,a',c'\ne 0$. Then
$$
t_1=i_1(a'/a)=i_1(c'/c),\; t_2=i_2(a'/a)=i_2(c'/c).
$$
Write $t_1=i_1(t_0)$ with $t_0\in E^{\times}$. Then
$t_0=a'/a=c'/c$.
Thus
$(a',c')=t_0(a,c)$.

(iv)
\begin{align*}
(i_2(a),i_2(b), i_2(a'),i_2(b'))&=l(i_1(b),i_1(a),i_1(b'),i_1(a')), \\
(i_2(c),i_2(d), i_2(c'),i_2(d'))&=m(i_1(d),i_1(c),i_1(d'),i_1(c')), \\
(i_1(c),i_1(d),i_1(c'),i_1(d'))&=t_1(i_1(a),i_1(b),i_1(a'),i_1(b')), \\
(i_2(c),i_2(d),i_2(c'),i_2(d'))&=t_2(i_2(a),i_2(b),i_2(a'),i_2(b')), \\
lt_2&=mt_1.
\end{align*}
Then $l,m,t_1,t_2\in i_1(E)=i_2(E)$.
Write $l=i_1(l_0)$, $m=i_1(m_0)$, $t_1=i_1(t_0)$ with $l_0,m_0,t_0\in E^{\times}$.
Then the third and fourth equations imply $t_2=i_2(t_0)$. 
The first three imply
\begin{align*}
(a^{\sigma}, b^{\sigma}, a^{\prime\sigma}, b^{\prime\sigma})&=l_0(b,a,b',a'),\\
(c^{\sigma}, d^{\sigma}, c^{\prime\sigma}, d^{\prime\sigma})&=m_0(d,c,d',c'),\\
(c,d,c',d')&=t_0(a,b,a',b')
\end{align*}
and the last  implies
$$
l_0t_0^{\sigma}=m_0t_0.
$$
Thus
\begin{alignat*}{2}
(a^{\sigma}, b^{\sigma})&=l_0(b,a), \quad& 
(a^{\prime\sigma}, b^{\prime\sigma})&=l_0(b',a'),\\
 (c^{\sigma}, d^{\sigma})&=m_0(d,c), \quad&
 (c^{\prime\sigma}, d^{\prime\sigma})&=m_0(d',c'),\\
(c,d)&=t_0(a,b),\quad&
(c',d')&=t_0(a',b').
\end{alignat*}

This finishes the proof of Theorem 2.1.

\section{The algebra of type W}

In this section we prove Theorem 2.2.

Let $E/F$ be a quadratic extension with nontrivial automorphism $\sigma$.
Let $\omega\in E-F$.
Let $A$ be the algebra of type W associated with $E/F$:
$A=E\oplus E$, 
$(a,b)(c,d)=(ac+\omega b^{\sigma}d, bc+a^{\sigma}d)$.
This is a division algebra,  four-dimensional with basis
$(1,0), (\omega,0), (0,1), (0,\omega)$.
Put $e=(1,0)$, $j=(0,1)$. $e$ is the identity element.

Put $A_0=E\oplus 0=\{(a,0)\mid a\in E\}$.
This is a subalgebra isomorphic to the field $E$.
The multiplication rule specializes to
\begin{align*}
(a,0)(c,d)&=(ac, a^{\sigma}d),\\
(a,b)(c,0)&=(ac,bc).
\end{align*}
The left and right multiplication by elements of $A_0$  is an associative action 
on  $A$, so that $A$ becomes a two-sided $A_0$-module.
And
$(a,b)=(a,0)+j(b,0)$,
$A=A_0+jA_0$.

Moreover one sees 
$$
t(xy)=(tx)y,\; (xt)y=x(ty),\; (xy)t=x(yt)
$$
for $x,y\in A$, $t\in A_0$ (\cite{Knu}).

A computation shows

\begin{lem}
For any $x\in A$ we have $(jx)j=j(xj)$ if and only if $x\in A_0$.
\end{lem}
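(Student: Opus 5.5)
The plan is to compute both sides directly from the multiplication rule
$(a,b)(c,d)=(ac+\omega b^{\sigma}d,\, bc+a^{\sigma}d)$ of type W, with $x=(a,b)$, $a,b\in E$, and $j=(0,1)$. Then compare the two results componentwise.

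First I compute the inner products. We have $jx=(0,1)(a,b)=(\omega b,\,a)$, using $1^{\sigma}=1$. We also have $xj=(a,b)(0,1)=(\omega b^{\sigma},\,a^{\sigma})$.

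Next I multiply once more by $j$. On one side,
$(jx)j=(\omega b,a)(0,1)=(\omega a^{\sigma},\,(\omega b)^{\sigma})=(\omega a^{\sigma},\,\omega^{\sigma}b^{\sigma})$.
On the other side,
$j(xj)=(0,1)(\omega b^{\sigma},a^{\sigma})=(\omega a^{\sigma},\,\omega b^{\sigma})$.

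The first components always agree. The difference of the second components is $(\omega^{\sigma}-\omega)b^{\sigma}$. Since $\omega\in E-F$, we have $\omega^{\sigma}\neq\omega$. Because $E$ is a field and $\sigma$ is injective, this difference vanishes if and only if $b=0$, that is, if and only if $x=(a,0)\in A_0$.

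There is no real obstacle here; the only point needing care is applying $\sigma$ to the correct factor in each product. In particular, the term $\omega b^{\sigma}d$ applies $\sigma$ to the left factor's second coordinate, and the term $a^{\sigma}d$ applies it to the left factor's first coordinate.
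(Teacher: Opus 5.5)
Your proof is correct and is exactly the direct computation the paper leaves to the reader ("A computation shows"). All four products check out. The conclusion rests on $(\omega^{\sigma}-\omega)b^{\sigma}=0\iff b=0$, which uses that the fixed field of $\sigma$ is exactly $F$, so $\omega^{\sigma}\neq\omega$; you handle this correctly.
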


\begin{prop}
If $0\ne t\in A_0$, then $A(tx,ty)=A(x,y)$ for any $x,y\in A$.

\end{prop}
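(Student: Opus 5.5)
The plan is to use the nucleus identity $(xy)t=x(yt)$ for $t\in A_0$, applied with $t$ in the right-hand factor position. For any $a\in A$ we get $a(tx)=(at)x$, and similarly $a(ty)=(at)y$. Here I use the identity $(xt)y=x(ty)$ recorded above, with $x:=a$, $t$, $y:=x$. It is one of the three identities quoted from \cite{Knu}, so it may be assumed.

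Hence
$$a(tx,ty)=((at)x,(at)y)=(at)(x,y)\in A(x,y),$$
which gives $A(tx,ty)\subset A(x,y)$. For the reverse inclusion I use that $A$ is a division algebra and $t\ne 0$, so $R_t$ is bijective. Given $b\in A$, choose $a=R_t^{-1}(b)$, so that $at=b$. Then $b(x,y)=a(tx,ty)$, and therefore $A(x,y)\subset A(tx,ty)$.

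Alternatively, one can note directly that $A_0\cong E$ is a field, take $t^{-1}\in A_0$, and apply the first inclusion to $t^{-1}$ and the pair $(tx,ty)$. This requires $t^{-1}(tx)=x$, which follows from the identity $t(xy)=(tx)y$ with first factors in $A_0$, or simply from the associative two-sided $A_0$-module structure.

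The only point needing care is checking that the identity being used is really the middle-associativity $(at)x=a(tx)$ with $t$ in the middle position. This is exactly the listed identity $(xt)y=x(ty)$, and no computation beyond it is needed. If one prefers to verify it directly, write $a=(a_1,a_2)$, $t=(s,0)$, $x=(c,d)$. Then $a(tx)=(a_1,a_2)(sc,s^\sigma d)=(a_1sc+\omega a_2^\sigma s^\sigma d,\ a_2sc+a_1^\sigma s^\sigma d)$, and $(at)x=(a_1s,a_2s)(c,d)$ gives the same expression. So this step is routine.
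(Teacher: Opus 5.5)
Your argument is correct and is the paper's proof: middle associativity gives $a(tx,ty)=(at)(x,y)$, and $At=A$ because $R_t$ is bijective, so $A(tx,ty)=A(x,y)$. One small slip: your opening sentence names the right-associativity identity $(xy)t=x(yt)$, but the identity you actually (and correctly) use is the middle one $(xt)y=x(ty)$.
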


\begin{proof}
By the middle associativity for $t\in A_0$
$$
a(tx,ty)=(a(tx),a(ty))=((at)x,(at)y)=(at)(x,y).
$$
Hence
$A(tx,ty)=(At)(x,y)=A(x,y)$
for $0\ne t\in A_0$.
\end{proof}

\begin{prop}
If $T\in GL_2(A_0)$, then
the linear transformation $(x,y)\mapsto (x,y)T$ of $A^2$ maps the space
$A(x,y)$ isomorphically to $A((x,y)T)$.
\end{prop}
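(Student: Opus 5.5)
The plan is to use the right associativity $(xy)t=x(yt)$ for $t\in A_0$ together with the fact that right multiplication by elements of $A_0$ is an associative right action, which makes $A$ a right $A_0$-module. Write $T=\begin{pmatrix} p & q\\ r & s\end{pmatrix}$ with $p,q,r,s\in A_0$. Then the map $\phi_T\colon A^2\to A^2$ is
$$
\phi_T(x,y)=(x,y)T=(xp+yr,\; xq+ys).
$$
This map is $F$-linear. Because the right action of $A_0$ on $A$ is associative, $\phi_{TT'}=\phi_{T'}\circ\phi_T$, and clearly $\phi_I=\mathrm{id}$. Since $T$ is invertible in $GL_2(A_0)$ (recall $A_0\cong E$ is a field, so $GL_2(A_0)$ is an ordinary matrix group), $\phi_T$ is bijective with inverse $\phi_{T^{-1}}$.

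The key step is to check that $\phi_T$ commutes with the left action of $A$ on $A^2$, that is, $\phi_T(a(x,y))=a\,\phi_T(x,y)$ for all $a\in A$. Expanding,
$$
\phi_T(ax,ay)=((ax)p+(ay)r,\;(ax)q+(ay)s).
$$
By the identity $(xy)t=x(yt)$ for $t\in A_0$, we have $(ax)p=a(xp)$, and similarly for the other three terms. Bilinearity then gives $a(xp+yr)$ and $a(xq+ys)$, which is $a\,\phi_T(x,y)$.

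From this commutation, $\phi_T(A(x,y))=\{\phi_T(a(x,y))\}=\{a\,\phi_T(x,y)\}=A((x,y)T)$. Restricting the injective linear map $\phi_T$ to the subspace $A(x,y)$ therefore gives an isomorphism onto $A((x,y)T)$.

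There is no serious obstacle here. The only point to be careful about is to invoke the correct associativity, namely the right-nucleus identity $(xy)t=x(yt)$ with $t\in A_0$, which was quoted from Knuth just above. One must not use a left-nucleus identity instead. One should also confirm that composing the maps $\phi_T$ matches matrix multiplication in the stated order, which is what justifies $\phi_T^{-1}=\phi_{T^{-1}}$.
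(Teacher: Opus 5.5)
Your proposal is correct and follows the same route as the paper: the paper's proof is the one-line remark that the statement follows from the right associativity $(ax)t=a(xt)$ for $t\in A_0$. You spell out the same argument in full, namely that $(x,y)\mapsto(x,y)T$ commutes with the left $A$-action and is bijective with inverse given by $T^{-1}$.
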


\begin{proof}
Follows  from the right associativity $(ax)t=a(xt)$ for $t\in A_0$.
\end{proof}

We restate Theorem 2.2 through the isomorphism $E\cong A_0$.

\begin{thm}
Let $x,y,x',y'\in A$ be nonzero elements.
Then $A(x,y)=A(x',y')$ if and only if either of the following holds:

(i) $x'=\lambda x$, $y'=\lambda y$ for some $\lambda\in A_0^{\times}$.

(ii) $x=y\mu$, $x'=y'\mu$
 for some $\mu\in A_0^{\times}$.
 
\end{thm}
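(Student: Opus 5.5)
Sufficiency is immediate from the results above. In case (i), Proposition 6.2 gives $A(\lambda x,\lambda y)=A(x,y)$. In case (ii), we have $(x,y)=(y\mu,y)$ and $(x',y')=(y'\mu,y')$. Note that $A(y\mu,y)=\{(a(y\mu),ay)\}=\{((ay)\mu,ay)\}$ by right associativity, and this is the graph of right multiplication by $\mu$ on $Ay=A$. The same holds for $y'$, so both spaces equal $\{(z\mu,z)\mid z\in A\}$.

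For necessity I would use Proposition 1.1. Since $A$ is a division algebra, all $R$'s of nonzero elements are invertible, and the condition becomes $R_xR_y^{-1}=R_{x'}R_{y'}^{-1}$. First I reduce by Proposition 6.3 with $T$ diagonal and by Proposition 6.2. Every nonzero $y\in A$ can be written $y=\lambda u$ with $\lambda\in A_0^\times$ and $u\in\{e\}\cup\{j+\alpha\}$-type normal form. Instead, it is simpler to replace $(x,y)$ by $(xy^{-1}\!\!,e)$ only where this is legitimate.

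Here is the cleaner route. Put $\phi=R_xR_y^{-1}$, which satisfies $\phi(ay)=ax$ for all $a$. The key structural step is to compute how $\phi$ interacts with the $A_0$-actions. Left associativity $t(ay)=(ta)y$ shows that $\phi$ commutes with $L_t$ for $t\in A_0$, because $\phi(t(ay))=\phi((ta)y)=(ta)x=t(ax)$. So every such $\phi$ is left $A_0$-linear. The question is then which left $A_0$-linear maps arise as $R_xR_y^{-1}$, and how many pairs $(x,y)$ represent each one.

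Using $A=A_0+jA_0$ and writing $y=(c,d)$, I would determine $x$ from $\phi$ and $y$ by $x=\phi(y)$ (take $a=e$), and then impose that $\phi(ay)=a\phi(y)$ for all $a$. Choosing $a=j$ and $a\in A_0$ reduces this to the condition $\phi(jy)=j\phi(y)$ together with left $A_0$-linearity. Hence the equation to solve is: given $(x,y)$, find all $y'$ with $\phi(jy')=j\phi(y')$, where $\phi(y')=x'$. Writing $y'=zy$ (possible since $R_y$ is bijective), this reads $\phi(j(zy))=j((zx))$, that is, $(j\ast z)x=j(zx)$ where $j\ast z$ is defined by $(j\ast z)y=j(zy)$.

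Now Lemma 6.1 becomes the decisive tool. The main obstacle will be to show that the solutions $z$ are exactly those with $z\in A_0^\times$, which gives case (i) via $x'=zx$, $y'=zy$, or else those for which $x$ and $y$ are right $A_0$-proportional, which gives case (ii). I expect this to follow from a direct coordinate computation. Write $z=(p,q)$ and $x=y\mu+ (\text{component})$, decompose $A$ as a right $A_0$-module, and use the associator $(j,z,w)$. By Lemma 6.1 this associator vanishes for $w=j$ only if $z\in A_0$, and the associator is right $A_0$-linear in $w$. So if $x\notin yA_0$, then $x$ and $y$ span $A$ as a right $A_0$-module, and the condition forces $z\in A_0$. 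If $x=y\mu$, we get case (ii) directly, since then $x'=\phi(y')=y'\mu$. I would carry out this associator computation explicitly in coordinates $(a,b)$ as the heart of the proof.
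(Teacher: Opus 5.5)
Your reduction is correct up to the equation $(j\ast z)x=j(zx)$. Your treatment of sufficiency and of the case $x=y\mu$ is also fine. One line is missing there: testing $a\in A_0$ and $a=j$ suffices because $A=A_0+A_0j$ and $(tj)w=t(jw)$ for $t\in A_0$.

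The gap is at the decisive step. You defer it to an unexecuted coordinate computation, and the mechanism you sketch does not close as stated. The condition $(j\ast z)x=j(zx)$ is \emph{not} the vanishing of the associator $(j,z,\cdot)$ at $x$ and $y$. Since $j\ast z$ is defined by $(j\ast z)y=j(zy)$, you have $(j,z,y)=(jz-j\ast z)y$. This is nonzero unless $j\ast z=jz$, and you have no a priori reason for that. So right $A_0$-linearity of $w\mapsto (j,z,w)$, together with $xA_0+yA_0=A$, gives you nothing by itself.

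The missing idea is to run the linearity argument on $D(w)=j(zw)-(j\ast z)w$ instead.
\begin{itemize}
\item By $(uv)t=u(vt)$ for $t\in A_0$, the map $D$ is right $A_0$-linear.
\item $D$ vanishes at $w=y$ by the definition of $j\ast z$, and at $w=x$ by the condition. Hence it vanishes on all of $A$ when $x\notin yA_0$.
\item Evaluating at $w=e$ gives $j\ast z=jz$, so $D(w)=-(j,z,w)$.
\item Then $D(j)=0$ is exactly $(jz)j=j(zj)$. Lemma 6.1 gives $z\in A_0$, and $z\ne 0$ since $y'\ne 0$. So $x'=zx$, $y'=zy$ is case (i).
\end{itemize}
No coordinates are needed.

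With this repair your argument is a reparametrization of the paper's. The paper uses Proposition 6.3 to transport $(x,y)$ to $(e,j)$ and then tests $a=e$ and $a=j$. Its $\tilde x'$ is your $z$, since $(zx,zy)=(z,zj)T$. Your extension of $D$ from $\{x,y\}$ to $A$ by right $A_0$-linearity is the same step as the paper's change of basis by $T^{-1}$.

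Finally, delete the abandoned reduction to $(xy^{-1},e)$. That replacement does not preserve $A(x,y)$ in this nonassociative algebra.
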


\begin{proof}
(i) is sufficient for the equality $A(x,y)=A(x',y')$ by Proposition 6.2.

Assume (ii):
$x=y\mu$, $x'=y\mu'$ with $\mu\in A_0^{\times}$.
Put
$$
T=\begin{pmatrix} 1 & 0 \\ -\mu & 1 \end{pmatrix},
$$
so that
$(x,y)T=(0,y)$, $(x',y')T=(0,y')$.
Since $A$ is a division algebra, we have $A(0,y)=0\oplus A=A(0,y')$.
By Proposition 6.3 it follows that $A(x,y)=A(x',y')$.

Let us prove the necessity.
Write $x=(a,b)$, $y=(c,d)$ and
put
$$
T=\begin{pmatrix} (a,0) & (c,0) \\
(b,0) & (d,0) \end{pmatrix}
\in M_2(A_0),
$$
so that
$(x,y)=(e,j)T$.

First consider the case where $x,y$ are right linearly independent over $A_0$.
Then $T^{-1}$ exists and
$(x,y)T^{-1}=(e,j)$.
Suppose $A(x,y)=A(x',y')$.
Put $(x',y')T^{-1}=(\tilde x',\tilde y')$.
Then, by Proposition 6.3,
$A(e,j)=A(\tilde x',\tilde y')$.
Since
$$
A(e,j)=\{(ae,aj)\mid a\in A\}=\{(a,aj)\mid a\in A\},
$$
the last equation  means that
$(a\tilde x')j=a\tilde y' $ for all $a\in A$.
Letting $a=e, j$, we have equations
$\tilde x'j=\tilde y'$, 
$(j\tilde x')j=j\tilde y'$, 
hence
$(j\tilde x')j=j(\tilde x' j)$.
By Lemma 6.1 this means   $\tilde x'\in A_0$.
Put $\lambda=\tilde x'$. Then
$(\tilde x',\tilde y')=(\lambda, \lambda j)=\lambda(e,j)$, so
$$
(x',y')=(\lambda(e,j))T=\lambda((e,j)T)=\lambda(x,y).
$$
Thus (i) holds.

Next consider the case where $x,y$ are right linearly dependent over $A_0$.
 Write $y=xt$ with $t\in A_0$.
For any $a\in A$
$$
a(x,y)=(ax,ay)=(ax,a(xt))=(ax,(ax)t),
$$
hence
$A(x,y)=\{(a,at)\mid a\in A\}$.
Therefore,
if $A(x,y)=A(x',y')$, then $y'=x't$, so (ii) holds.

\end{proof}

\end{document}